\newtheorem{thm}{Theorem}[section]
\newtheorem{lem}[thm]{Lemma}
\newtheorem{prop}[thm]{Proposition}
\newtheorem{rem}[thm]{Remark}
\newcommand{\R}{\mathbb{R}}
\newcommand{\N}{\mathbb{N}}
\newcommand{\dive}{\rm{div}}
\newcommand{\curl}{\rm{curl}}
\newcommand{\hsol}{H_L(\Omega)}
\newcommand{\hsoln}{H_L^n(\Omega)}
\newenvironment{sistema}%
{\left\{\begin{array}{@{}l@{}}}{\end{array}\right.}
\patchcmd{\abstract}{\scshape\abstractname}{\textbf{\abstractname}}{}{}
\def\@makefnmark{} 
\begin{document}
\title[The Monotonicity Principle for Magnetic Induction Tomography]{The Monotonicity Principle for Magnetic Induction Tomography}
\author{Antonello Tamburrino$^{1,2}$, Gianpaolo Piscitelli$^1$, Zhengfang Zhou$^3$}
\address{$^1$Dipartimento di Ingegneria Elettrica e dell'Informazione \lq\lq M. Scarano\rq\rq, Universit\`a degli Studi di Cassino e del Lazio Meridionale, Via G. Di Biasio n. 43, 03043 Cassino (FR), Italy.}
\address{$^2$Department of Electrical and Computer Engineering, Michigan State University, East Lansing, MI-48824, USA.}
\address{$^3$Department of Mathematics, Michigan State University, East Lansing, MI-48824, USA}
\eads{ \mailto{antonello.tamburrino@unicas.it} {\it(corresponding author)}, \mailto{gianpaolo.piscitelli@unicas.it}, \mailto{zfzhou@msu.edu}}
\vspace{10pt}

\begin{abstract}
The inverse problem dealt with in this article consists of reconstructing the electrical conductivity from the free response of the system in the magneto-quasi-stationary (MQS) limit. The MQS limit corresponds to a diffusion PDE. In this framework, a key role is played by the Monotonicity Principle (MP), that is a monotone relation connecting the unknown material property to the (measured) free-response. The MP is relevant as the basis of noniterative and real-time imaging methods.

The Monotonicity Principle has been found in many different physical problems governed by diverse PDEs. Despite its rather general nature, each physical/mathematical context requires the proper operator showing the MP to be identified. In order to achieve this, it is necessary to develop ad-hoc mathematical approaches tailored to the specific framework.

In this article, we prove that there exists a monotonic relationship between the electrical resistivity and the time constants characterizing the free response for MQS systems.
The key result is the representation of the induced current density through a modal representation. The main result is based on the analysis of an elliptic eigenvalue problem obtained from the separation of variables.
\end{abstract}
\ams{34K29, 35R30, 45Q05, 47A75, 78A46}
\noindent{\it Keywords\/}: Monotonicity Principle, Tomography, Magnetic Induction, Modal decomposition, Eigenvalue Problem.

\section{Introduction}
In this paper, we study the inverse problem of reconstructing the electrical conductivity from the free response of Maxwell equations in the magneto-quasi-stationary (MQS) limit.
This limit corresponds to the case when (i) the displacement current appearing in the Amp\`{e}re-Maxwell law can be neglected, (ii) there is a conducting material and (iii) the magnetic energy of the system dominates the electrical energy \cite{haus1989electromagnetic}. In this limit a time-varying magnetic flux density is able to penetrate inside a conducting material inducing electrical currents known as eddy currents. In turn, the induced eddy currents produce a magnetic flux density which can be measured using sensors external to the conductor, thus enabling the tomographic capabilities (Eddy Current Tomography or Magneto Inductive Tomography) of material properties, such as the electrical resistivity (or, equivalently, conductivity) and magnetic permeability. Specifically, in this work we refer to the imaging of a conductor's electrical conductivity starting from the free response of the material, that is the response of the system when all sources have been switched off. We assume that the conducting material does not have magnetic properties. However, the extension to conductive and magnetic materials is straightforward.

Eddy Currents can be modeled in several ways. Here we choose to study the free response on an Eddy Current problem modeled by the following integral formulation (non-magnetic materials) \cite{albanese1997finite}:
\begin{eqnarray*}
\left\langle \eta \mathbf{J}, \mathbf{w}\right\rangle =-\partial_{t}\left\langle
A\mathbf{J}, \mathbf{w}\right\rangle \quad \forall\mathbf{w}\in\hsol,
\end{eqnarray*}
where $\bf J$ is the induced current density, $\Omega$ is the region occupied by the conducting material, $A$ is the compact, self-adjoint, positive definite operator
:
\begin{eqnarray}
\label{operat}
A:\mathbf{v} \in H_L(\Omega) \mapsto\frac{\mu_0}{4\pi}\int_\Omega\frac{\mathbf{v}(x^\prime) }{\vert\vert x-x^\prime\vert\vert}\ \textrm{d}V(x')\in L^2(\Omega;\R^3),
\end{eqnarray}
$\hsol$ is a proper functional space that will be defined later, $\eta$ is the electrical resistivity and $\mu_0$ is free space magnetic permeability.

In inverse problems, a key role is played by the Monotonicity Principle, based on a monotonic relationship connecting the unknown material property to the measured physical quantity. 
The Monotonicity Principle is fundamental for developing non-iterative imaging methods suitable for real-time imaging. It was introduced by A. Tamburrino and G. Rubinacci in \cite{Tamburrino_2002, Tamburrino2006FastMF,Tamburrino_2006}. The Monotonicity Principle has mainly been applied to inverse obstacle problems in which the aim is to reconstruct the shape of anomalies in a given background. In this setting, the method determines whether or not a test inclusion is part of an anomaly.
The three features rendering this test highly suitable are as follows: (i) the computational cost for processing a given test inclusion is negligible, (ii) the processing on different test inclusions can be carried out in parallel, and (iii) under proper assumptions, the MP provides rigorous upper and lower bounds to the unknown anomaly, even in the presence of noise (see \cite{Tamburrino2016284}, based on \cite{Tamburrino_2002,harrach2015resolution}).  {Moreover, it has been proved that the Monotonicity Principle provides a complete characterization of the unknown, under proper assumptions and in different settings (see \cite{harrach2013monotonicity,eberle2020shape,harrach2019monotonicity,griesmaier2018monotonicity,albicker2020monotonicity,daimon2020monotonicity, harrach2019monotonicity-based,harrach2020monotonicity-based,harrach2015combining}).}

The Monotonicity Principle appears to be a general feature which can be found in many different physical problems governed by diverse PDEs.
Although it was originally found for stationary PDEs arising from static problems (such as, Electrical Resistance Tomography, Electrical Capacitance Tomography, Eddy Current Tomography and Linear Elastostatics) \cite{Tamburrino_2002,Tamburrino_2006,harrach2013monotonicity,Calvano201232,Tamburrino2003233,eberle2020shape}, it was also found for stationary PDEs arising as the limit of quasi-static problems, such as Eddy Current Tomography for either large or small skin depth operations \cite{Tamburrino2006FastMF,Tamburrino_2006,Tamburrino_2010}.

The Monotonicity Principle has also been found and applied to tomography for problems governed by parabolic evolutive PDEs, see \cite{Su_2017,Tamburrino20161,Tamburrino2015159,Su2017,Tamburrino_2016_testing}. In this case the monotone operator is the one mapping the electrical resistivity into the (ordered) set of time constants for the natural modes.

Moreover, Monotonicity has been found in phenomena governed by Helmotz equations arising from wave propagation problems (hyperbolic evolutive PDEs) in time-harmonic operations (see \cite{AT_WAVE2015, harrach2019dimension, harrach2019monotonicity,griesmaier2018monotonicity,albicker2020monotonicity,daimon2020monotonicity}). Monotonicity for the Helmoltz equation arising from (steady-state) optical diffuse tomography was introduced in \cite{meftahi2020uniqueness}. 

Monotonicity has also been proved for nonlinear generalizations of elliptic PDEs arising from static phenomena. The most comprehensive contribution is \cite{corboesposito2020monotonicity}, which deals with the nonlinear elliptic case. 
The application setting refers to Electrical Resistance Tomography, where the electrical resistivity is nonlinear. The specific case of $p$-Laplacian is also treated in \cite{brander2018monotonicity} and \cite{guo2016inverse}.


Furthermore, Monotonicity is proved for the nonlocal fractional Schr\"odinger equation in \cite{harrach2019monotonicity-based,  harrach2020monotonicity-based}.

The concept of regularization for the Monotonicity Principle Method (MPM) is introduced in \cite{Rubinacci20061179,garde2017convergence}. It is worth noting that imaging via the Monotonicity Principle cannot be based upon the minimization of an objective function, where regularization can be easily introduced by means of penalty terms. Vice versa, the Monotonicity Principle is also used  as a regularizer in \cite{harrach2016enhancing}.

The MPM has been applied to many different engineering problems. A first experimental validation of the MPM for Eddy Current Tomography is shown in  \cite{Tamburrino201226}. Monotonicity is combined with frequency-difference
and ultrasound-modulated Electrical Impedance Tomography measurements in \cite{harrach2015combining}.
The authors in \cite{flores2010electrical} use a priori information obtained from Breast Microwave Radar images to estimate the location of dense breast regions.
Harrach and Minh in \cite{harrach2018monotonicity} give a new algorithm to improve the quality of the reconstructed images in electrical impedance tomography together with numerical results for experiments on a standard phantom.
\cite{soleimani2007monotonicity} proposes an application of the MPM for two-phase materials in Electrical Capacitance Tomography, Electrical Resistance Tomography and Magneto Inductance Tomography. 
Other results of the MPM applied to Tomography and Nondestructive Testing, can be found in \cite{ventre2016design,soleimani2006shape}. Furthermore, \cite{zhou2018monotonicity} presents a monotonicity-based spatiotemporal conductivity imaging method for continuous regional lung monitoring using electrical impedance tomography (EIT). The MPM has also been applied to the homogenization of materials \cite{7559815} and the inspection of concrete rebars \cite{DeMagistris2007389,Rubinacci2007333}.

The imaging methods based on the Monotonicity Principle fall within the class of non-iterative imaging methods. Colton and Kirsch introduced the first non-iterative approach named Linear Sampling Method (LSM) \cite{Colton_1996}, then Kirsch proposed the Factorization Method (FM)  \cite{Kirsch_1998}. Ikeata proposed the Enclosure Method \cite{ikehata1999draw,Ikehata_2000} and Devaney applied MUSIC (MUltiple SIgnal Classification), a well-known algorithm in signal processing, as an imaging method \cite{Devaney2000}. In \cite{fernandez2019noniterative}, a non-iterative method based on the concept of topological derivatives is proposed to find the shape of anomalies in an otherwise homogeneous material.

In this work, we prove that the electrical current density in the absence of the source (source-free response) can be represented through a modal decomposition:
\begin{eqnarray}
\label{eq20Su}
\mathbf{J}\left(x,t\right)  =\sum_{n=1}^\infty c_n\ {\bf j}_n (x)e^{-t/\tau_n(\eta)}\quad\textrm{in}\ \Omega\times [0,+\infty[.
\end{eqnarray}
In modal decomposition (\ref{eq20Su}), $\mathbf{j}_{n}(x)$ is a mode and $\tau_n(\eta)>0$ is the corresponding time constant, $\forall n \in \N$. Each mode and its related time constant depend on the electrical resistivity $\eta$. In Proposition \ref{prop_complete} we prove that the sequence of modes $\left\{  \mathbf{j}_{n}\right\}  _{n\in\mathbb{N}}$ is a complete basis.
Moreover, we prove that $\tau_n(\eta)\to 0$ as $n\to\infty$ (Proposition \ref{infinitesima}). Equation (\ref{eq20Su}) allows us to generalize the representation in \cite[eq. (20)]{Su_2017}, valid for the discrete case, to the continuous case. 

Moreover, in Theorem \ref{monotonicity_thm}, we prove the Monotonicity Principle for the sequence of time constants $\{\tau_n(\eta)\}_{n\in\N}$:
\begin{eqnarray}
\label{monotonicity_tc}
\eta_1 \leq\eta_{2}\quad \Rightarrow\quad\tau_{n}\left(  \eta_{1}\right)  \geq\tau
_{n}\left(  \eta_{2}\right)  \quad \forall n\in
\mathbb{N},
\end{eqnarray}
where $\eta_1\leq\eta_2$ means $\eta_1(x)\leq \eta_2(x)$ for a.e. $x\in\Omega$. The time constants $\tau_n(\eta)$ appearing in (\ref{monotonicity_tc}) must be ordered monotonically. Hereafter, we assume they are placed in decreasing order.

The original content of this paper consists in extending the Monotonicity Principle for time constants from the discrete setting of \cite{Su_2017} to the continuous case. This extension requires methods and techniques which are completely different from those previously used for the discrete setting. Moreover, this paper provides the mathematical foundations to justify the truncated version of (\ref{eq20Su}), which underpins the discrete setting. Specifically, the convergence to zero of the time constants allows series expansion (\ref{eq20Su}) to be truncated to a proper finite number of terms.

The paper is organized as follows. In Section \ref{FoP}, we describe the target inverse problem together with the mathematical model of the underlying physics. In Section \ref{TEP} we study the eigenvalue problem which gives rise to time constants and, in Section \ref{MoE}, we prove the main result: the Monotonicity Principle for time constants. In Section \ref{Dfr} we provide a discussion of the results and, finally, in Section \ref{C} we draw some conclusions.

\section{Statement of the Problem}\label{FoP}
The reference problem in Magnetic Induction Tomography (MIT) consists in retrieving the spatial distribution of the electrical resistivity of a material by means of electromagnetic fields.

The electromagnetic field is generated by time-varying electrical currents circulating in a proper set of coils (see Figure \ref{fig_schema}). These time-varying currents produce a time-varying magnetic flux density $\textbf{B}(x,t)$ that induces an electrical field $\textbf{E}(x,t)$ and, consequently, an electrical current density $\textbf{J}(x,t)$ in the conducting domain $\Omega$ \cite{haus1989electromagnetic}. The electrical resistivity $\eta$ affects the induced current density $\textbf{J}(x,t)$ which, in turns, produces a \lq\lq reaction\rq\rq magnetic flux density $\textbf{B}^{eddy}(x,t)$. In MIT the measurement of $\textbf{B}^{eddy}(x,t)$ carried out externally to the conducting domain, makes it possible, in principle, to reconstruct the unknown $\eta$.

\begin{figure}[!ht]	\centering	\includegraphics[width=0.7\textwidth]{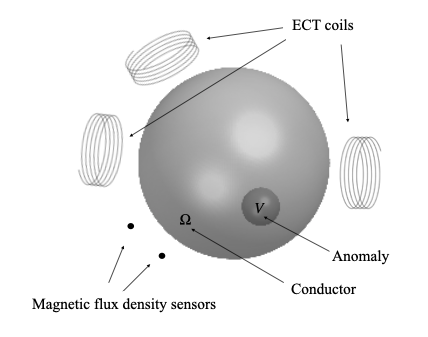}	\caption{Representation of a typical Magnetic induction tomography.}	\label{fig_schema}\end{figure}

We have two types of measurements related to $\textbf{B}^{eddy}$. The first consists in measuring $\textbf{B}^{eddy}$ with a magnetic flux density sensor, the second consists in measuring the induced voltage $v_{eddy}(t)$ across a pick-up coil. It is worth noting that $v^{eddy}=d\varphi^{eddy}/{dt}$ where $\varphi^{eddy}$ is the magnetic flux linked with the pick-up coil. Moreover, these quantities share the same set of time constants of $\mathbf J$. Indeed, by applying the Biot-Savart law to the source free response (\ref{eq20Su}), we have
\begin{eqnarray}\label{eq21Su}
\mathbf{B}^{eddy}\left(x,t\right)=\sum_{n=1}^\infty c_n {\bf b}_n (x)e^{-t/\tau_n(\eta)}\quad\textrm{in}\ \Omega\times [0,+\infty[
\end{eqnarray}
and
\begin{eqnarray}\label{eq22Su}
v^{eddy}(t)=\frac{d\varphi^{eddy}(t)}{dt}=\sum_{n=1}^\infty c_n v_{n}e^{-t/\tau_n(\eta)}\quad\textrm{in}\ \Omega\times [0,+\infty[.
\end{eqnarray}
Summing up, the protocol entails in gathering the waveform of either $\textbf{B}^{eddy}$ or $v^{eddy}$. Then, the waveforms are pre-processed to extract the time constants and, finally, the set of time constants is provided as input to the imaging algorithm.

\subsection{Mathematical Model for the Eddy Current Problem}
In this Section, for the sake of completeness, we summarize the mathematical model for the Eddy Current problem.

Throughout this paper, $\Omega$ is the region occupied by the conducting material. We assume $\Omega\subset\R^3$ to be an open bounded domain with a Lipschitz boundary and outer unit normal $\hat {\bf n}$.
We denote by $V$ and $S$ the $3$-dimensional and the bi-dimensional Hausdorff measure in $\R^3$, respectively and by $\langle\cdot,\cdot\rangle$ the usual $L^2$-integral product on $\Omega$. 

 {Hereafter we refer to the following functional spaces
\begin{eqnarray*}
L^\infty_+(\Omega):=\{\theta\in L^\infty(\Omega) \ | \ \theta\geq c_0\ \mathrm{a.e.\ in} \ \Omega \ \mathrm{for}\ c_0>0\},\\
H_{\mathrm{div}}(\Omega):=\{ {\bf v} \in L^2(\Omega;\R^3) \ |\ \mathrm{div} ({\bf v}) \in L^2(\Omega)\},\\
H_{\mathrm{curl}}(\Omega):=\{ {\bf v} \in L^2(\Omega;\R^3) \ |\ \mathrm{curl} ({\bf v}) \in L^2(\Omega)\},\\
\hsol  :=\left\{  \mathbf{v}\in H_{\rm div}(\Omega)\   | \ \dive (\mathbf{v})=0\ \mathrm{in}\ \Omega,\ \mathbf{v}\cdot\hat{\bf n}=0\ \mathrm{on}\ \partial \Omega\right\},
\end{eqnarray*}
and to the derived spaces $L^2(0,T;H_L(\Omega))$ and $L^2(0,T;H_{\rm curl}(\Omega))$, for any $0<T< +\infty$.
}

Let ${\bf E}$, ${\bf B}$, ${\bf H}$ and ${\bf J}$ be the electric field, the magnetic flux density, the magnetic field and the electrical current density, respectively. The Magneto-Quasi-Stationary approximation of Maxwell's equations is described by (see, for instance, \cite{touzani2014mathematical}):
\begin{eqnarray}
\label{third_max}
{\curl}\ {\mathbf{E}}(x,t) =-\partial_t{\bf B}(x,t)\  &\textrm{in}&\ \R^3\times [0,+\infty[,
\\
\label{fourth_max}
{\curl}\ {\bf H}(x,t) = {\bf J} (x,t)+{\bf J}_s(x,t)
\ &\textrm{in}& \ \R^3\times [0,+\infty[ ,
\\
\label{second_max}
{\dive}\ {\bf B}(x,t)  = 0\ &\textrm{in}&\ \R^3\times [0,+\infty[,
\end{eqnarray}
\vspace{-0.7cm}
\begin{eqnarray}
{\bf J} (x,t) = 
\begin{sistema}\label{Ohm} 
 \eta^{-1}(x) \ {\bf E}(x,t)\qquad\ \ \textrm{in}\ \Omega\times [0,+\infty[\\
 0 \qquad\qquad\qquad\qquad \textrm{in} \ (\R^3\setminus\Omega)\times [0,+\infty[,
\end{sistema}
\end{eqnarray}
\vspace{-0.5cm}
\begin{eqnarray}
\label{dpermH}
{\bf B}(x,t)=\mu_0 {\bf H}(x,t)\qquad\qquad\quad\ \textrm{in}\ \Omega\times [0,+\infty[,
\end{eqnarray}
\begin{eqnarray}\label{inf_van}
|{\bf B}(x,t)|=O(|x|^{-3})\ \textrm{uniformly for $t\in[0,+\infty[$, as} \ |x| \to +\infty,
\end{eqnarray}
where  $\eta\in L_+^\infty(\Omega)$ is the electrical resistivity of the conductor, $\mu_0$ is the magnetic permeability of the free space, ${\bf J}_s$ is the prescribed source current density and we have assumed that there are no magnetic materials.
Equations (\ref{third_max}), (\ref{fourth_max}) and (\ref{second_max}) are to be understood in the weak form in order to implicitly take into account the jumps of the fields at material interfaces \cite{bossavit1998computational}.
Problem (\ref{third_max})-(\ref{inf_van}) is equivalent to an integral formulation in the weak form (see \cite{albanese1997finite,bossavit1981numerical}):
\begin{eqnarray}
\label{weak_form}
\left\langle \eta \mathbf{J}, \mathbf{w}\right\rangle =-\langle\partial_{t}
A\mathbf{J},\mathbf{w}\rangle-\langle\partial_{t}{\bf A}_S, \mathbf{w}\rangle \quad \forall\mathbf{w}\in H_L(\Omega),
\end{eqnarray}
where ${\bf A}_S\in L^2(0,T;H_{\rm curl}(\Omega))$ is the vector potential produced by the prescribed source current ${\bf J}_s\in L^2(0,T;H_{L}(\Omega_s))$, $\Omega_s$ is a bounded open set with Lipschitz boundary,
$A$ is the operator
defined in (\ref{operat}) 
and ${\bf J}\in L^2(0,T;\hsol)$.

We refer to \cite{bossavit1998computational,bossavit1981numerical} for $\hsol$ and a general discussion on functional spaces related to Maxwell equations.
The functional space $\hsol$ was applied in the development of numerical methods for the solution of integral source formulations of Maxwell equations in \cite{albanese1997finite, forestiere2017frequency}. The elements of $\hsol$ are vector fields with closed streamlines (see also Figure \ref{fig4}).
\begin{figure}[!ht]
	\centering
	\includegraphics[width=0.45\textwidth]{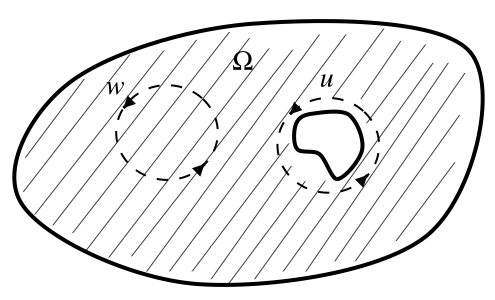}
		\caption{The domain $\Omega$ (dashed region) and two typical elements $\bf w$ and $\bf u$ of $\hsol$. Both vector fields present closed streamlines. The streamlines for $\bf w$ are homotopic to a point, whereas the streamlines for $\bf u$ circulate around the cavity.}
	\label{fig4}
\end{figure}

\section{The Eigenvalue Problem}\label{TEP}

This Section provides the characterization of the time constants. Specifically, we prove that they are the eigenvalues for a proper generalized eigenvalue problem. Subsection \ref{TFE} gives the variational characterization of the first eigenvalue, whereas Subsection \ref{THE} deals with higher eigenvalues.

To find this generalized eigenvalue problem, we notice that in the absence of source currents (\ref{weak_form}) reduces to:
\begin{eqnarray*}
\left\langle \eta \mathbf{J}, \mathbf{w}\right\rangle =-\partial_{t}\langle
A\mathbf{J},\mathbf{w}\rangle\quad\forall\mathbf{w}\in \hsol).
\end{eqnarray*}
Then, the separation of variables $\mathbf{J}\left(x,t\right)  =i\left(  t\right)  \mathbf{j}\left(  x\right)$ gives
\begin{equation*} 
i\left(  t\right)  \left\langle \eta(x)\mathbf{j}(x),\mathbf{w}(x)\right\rangle
=-i'(t) \left\langle A\mathbf{j}(x),\mathbf{w}(x)\right\rangle \quad \forall\mathbf{w}\in \hsol,
\end{equation*}
and, therefore,%
\begin{eqnarray*}
\frac{i\left(  t\right)}{i'\left(  t\right)} =-\tau(\eta)\quad\forall t\in]0,+\infty[,\\
\frac{\left\langle A \mathbf{j},\mathbf{w}\right\rangle
}{\left\langle\eta\mathbf{j}, \mathbf{w}\right\rangle }  =\tau(\eta)
\quad \forall\mathbf{w}\in\hsol,
\end{eqnarray*}
where $\tau(\eta)$ is the separation constant. Therefore, $\mathbf{J}\left(x,t\right)  =e^{-t/\tau(\eta)}\mathbf{j}\left(  x\right)  $ and%
\begin{equation}
\left\langle A\mathbf{j},\mathbf{w}\right\rangle =\tau(\eta)\left\langle \eta\mathbf{j}, \mathbf{w}\right\rangle\quad \forall\ \mathbf{w}\in\hsol .\label{eq2}%
\end{equation}
When (\ref{eq2}) admits a nonzero solution, the real number $\tau$ and the function $\mathbf{j}\in \hsol\setminus\{\mathbf 0\}$ are called the {\it eigenvalue} and {\it eigenfunction} of (\ref{eq2}), respectively.


In this Section,  we prove that:
\begin{enumerate}
\item the generalized eigenvalues and eigenvectors form countable sets:
$\left\{  \tau_{n}(\eta)\right\}  _{n\in%
\mathbb{N}
}$ and $\left\{  \mathbf{j}_{n}\right\}  _{n\in%
\mathbb{N}
}$;
\item the eigenvalues can be ordered such that $\tau_{n}(\eta)\geq\tau_{n+1}(\eta)$;
\item $\tau_{n}(\eta)>0$ and $\lim_{n\rightarrow+\infty}\tau_{n}(\eta)=0$;
\item the set of eigenvectors $\{\mathbf{j}_{n}\}_{n\in\N}$ forms a complete basis in $\hsol$.
\end{enumerate}



In order to prove these properties, the key assumptions are that $A$ is a positive definite, compact and self-adjoint operator and $\eta$ is a positive definite and bounded operator (see \cite{nirenberg1961lecture,armitage2012classical}).

The approach presented hereafter is inspired by the Rayleigh-Ritz minimum principle and the related Min-Max Principle (see Courant and Hilbert \cite[Chap. V]{courant1966methods}, Davies \cite[Chap. I]{davies1996spectral}, Kesavan  \cite[Chap. 4]{kesavan1989topics}, Reed and Simon \cite[Chap. XIII]{reed1978iv}).
Specifically, the Theorem 3.6.2 in \cite{kesavan1989topics} provides the three characterizations for linear eigenvalues that we use throughout this paper. They are based on the concepts of span, orthogonality and fixed dimension.

Throughout this paper, $\left\Vert\mathbf{v}\right\Vert_{\eta}$ denotes the weighted norm
\begin{eqnarray*}
\left\Vert\mathbf{v}\right\Vert_{\eta}:=\sqrt{\left\langle\eta \mathbf{v},\mathbf{v}\right\rangle }, \quad \forall \mathbf{v} \in{L}^{2}\left(\Omega;\R^3\right).
\end{eqnarray*}

For the sake of simplicity, hereafter we omit the dependence of $\bf j$ on $\eta$, and we omit the dependence of $\tau$ on $\eta$ in the proofs.

\subsection{The First Eigenvalue}\label{TFE}
In order to compute the first eigenvalue of (\ref{eq2}), we use the variational characterization:
\begin{eqnarray}
\label{minimum}
\tau_{1}(\eta)=\max_{\left\Vert \mathbf{j}\right\Vert _{\eta}=1, \ \mathbf{j}\in\hsol}\langle A  \mathbf{j},\mathbf{j}\rangle.
\end{eqnarray}

In the following we exploit the fact that $\hsol$ is weakly closed in ${L}^{2}\left(\Omega;\R^3\right)$. 
Indeed, we have the following Lemma. 
\begin{lem}\label{lemma01}
Let $\mathbf{v}_{j}\in\hsol$. If $\mathbf{v}_{j}\rightharpoonup\mathbf{v}$ in ${L}^{2}\left(  \Omega;\R^3\right)$, then $\mathbf{v}\in \hsol$.
\end{lem}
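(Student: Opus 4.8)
The plan is to characterize membership in $\hsol$ by a family of linear constraints that are manifestly stable under weak $L^2$-convergence. Concretely, I would first establish that for $\mathbf{v}\in L^2(\Omega;\R^3)$ one has $\mathbf{v}\in\hsol$ if and only if
\begin{eqnarray*}
\int_\Omega \mathbf{v}\cdot\nabla\varphi\,\mathrm{d}V = 0 \qquad \forall\,\varphi\in H^1(\Omega).
\end{eqnarray*}
Once this equivalence is in hand the lemma follows in one line: for each fixed $\varphi\in H^1(\Omega)$ the gradient $\nabla\varphi$ belongs to $L^2(\Omega;\R^3)$, so $\mathbf{u}\mapsto\langle\mathbf{u},\nabla\varphi\rangle$ is a bounded linear functional on $L^2(\Omega;\R^3)$; hence $\mathbf{v}_j\rightharpoonup\mathbf{v}$ forces $\langle\mathbf{v}_j,\nabla\varphi\rangle\to\langle\mathbf{v},\nabla\varphi\rangle$, and since every term of the sequence vanishes, so does the limit. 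As $\varphi$ is arbitrary, $\mathbf{v}\in\hsol$.

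It remains to prove the claimed characterization, which is where the only real work lies. For the ``only if'' direction, take $\mathbf{v}\in\hsol$; by definition $\mathbf{v}\in H_{\rm div}(\Omega)$ with $\dive\,\mathbf{v}=0$, and its normal component $\mathbf{v}\cdot\hat{\mathbf{n}}$ is well defined in $H^{-1/2}(\partial\Omega)$ and equal to zero. The generalized Green (divergence) formula for $H_{\rm div}(\Omega)$,
\begin{eqnarray*}
\langle \mathbf{v}\cdot\hat{\mathbf{n}},\,\varphi|_{\partial\Omega}\rangle_{\partial\Omega} = \int_\Omega \dive(\mathbf{v})\,\varphi\,\mathrm{d}V + \int_\Omega \mathbf{v}\cdot\nabla\varphi\,\mathrm{d}V,\qquad\varphi\in H^1(\Omega),
\end{eqnarray*}
then gives $\int_\Omega\mathbf{v}\cdot\nabla\varphi\,\mathrm{d}V=0$. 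For the ``if'' direction, suppose the integral identity holds for all $\varphi\in H^1(\Omega)$. Testing against $\varphi\in C_c^\infty(\Omega)$ shows $\dive\,\mathbf{v}=0$ in the distributional sense, so in particular $\mathbf{v}\in H_{\rm div}(\Omega)$; feeding this back into the Green formula leaves $\langle\mathbf{v}\cdot\hat{\mathbf{n}},\varphi|_{\partial\Omega}\rangle_{\partial\Omega}=0$ for all $\varphi\in H^1(\Omega)$, and since the trace map $H^1(\Omega)\to H^{1/2}(\partial\Omega)$ is surjective (here the Lipschitz regularity of $\partial\Omega$ is used), this yields $\mathbf{v}\cdot\hat{\mathbf{n}}=0$ in $H^{-1/2}(\partial\Omega)$. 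Hence $\mathbf{v}\in\hsol$.

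The main obstacle, such as it is, is purely a matter of invoking the correct distributional and trace machinery for $H_{\rm div}(\Omega)$ on a Lipschitz domain: the existence of the normal trace in $H^{-1/2}(\partial\Omega)$, the validity of the Green identity for test functions in $H^1(\Omega)$, and the surjectivity of the trace operator. With these standard facts the argument is entirely soft --- no compactness or elliptic estimates are needed, only the continuity of one fixed linear functional. One could even shorten the exposition by adopting the weak formulation $\int_\Omega\mathbf{v}\cdot\nabla\varphi\,\mathrm{d}V=0$ as the working definition of $\hsol$, as is common for such ``divergence-free, tangential'' spaces, in which case the lemma is immediate and the characterization step disappears altogether.
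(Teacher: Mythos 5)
Your proof is correct and follows essentially the same route as the paper: test against $\nabla\varphi$ for $\varphi\in H^1(\Omega)$, note that each $\mathbf{v}_j$ is orthogonal to such gradients by the divergence theorem, pass to the limit using weak convergence, and recover $\dive(\mathbf{v})=0$ and $\mathbf{v}\cdot\hat{\mathbf{n}}=0$ from the limiting identity. The only difference is that you spell out the $H_{\rm div}$ trace machinery (normal trace in $H^{-1/2}(\partial\Omega)$, surjectivity of the trace operator) that the paper leaves implicit.
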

\begin{proof}
If $\varphi\in H^{1}(\Omega)$, then $\mathbf{w}=\nabla
\varphi\in{L}^{2}\left(  \Omega;\R^3\right)$ and, by a Divergence Theorem, we have 
\begin{eqnarray*}
\int_{\Omega}\mathbf{v}_{j}\cdot\nabla\varphi\ \mathrm{d}V=\int_{\partial \Omega}%
\varphi\left(  \mathbf{v}_{j}\cdot\mathbf{\hat{n}}\right)\  \mathrm{d}S-\int%
_{\Omega}\varphi\ \dive\left(\mathbf{v}_{j}\right) \mathrm{d}{\it V}=0.
\end{eqnarray*}
Thus, 
it results that%
\[
\int_{\Omega}\mathbf{v}\cdot\nabla\varphi\ \mathrm{d}V=0\quad \forall\varphi\in H^{1}\left(\Omega\right).
\]
Therefore, $\dive (\mathbf{v})=0$ in $\Omega$ and $\mathbf{v}\cdot\mathbf{\hat{n}}=0$ on $\partial \Omega$, i.e. $\mathbf{v}\in\hsol$.
\end{proof}

The first eigenvalue and eigenvector are characterized by the following Proposition.

\begin{prop}
\label{lemma02}
Let $\Omega$ be an open bounded domain in $\R^3$, with Lipschitz boundary. Then problem (\ref{minimum}) admits a maximum, i.e. there exists $\mathbf{j}_{1}\in L^2\left(\Omega;\R^3\right)$ such
that\newline(i) $\mathbf{j}_{1}\mathbf{\in}\hsol $%
;\newline(ii) $\langle A  \mathbf{j}_{1},\mathbf{j}_{1}\rangle  =\tau_{1}(\eta)$%
;\newline(iii) $\left\Vert \mathbf{j}_{1}\right\Vert _{\eta}=1$;
\newline(iv) $\tau_1$ is an eigenvalue of (\ref{eq2}) and $\mathbf{j}_1$ is the associated eigenvector, i.e.
\[
\left\langle A\mathbf{j}_{1},\mathbf{w}\right\rangle =\tau
_{1}(\eta)\left\langle \eta\mathbf{j}_{1},\mathbf{w}\right\rangle \quad
\ \forall\mathbf{w}\in \hsol.
\]
\end{prop}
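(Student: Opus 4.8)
\emph{Approach.} I would prove this by the direct method of the calculus of variations, followed by a one-parameter (Lagrange-multiplier) perturbation to obtain the Euler--Lagrange equation. The two structural facts doing the work are that the map $\mathbf{j}\mapsto\langle A\mathbf{j},\mathbf{j}\rangle$ is weakly continuous on $L^2$-bounded sets because $A$ is compact, and that $\hsol$ is weakly closed in $L^2(\Omega;\R^3)$ by Lemma \ref{lemma01}. The constraint set $\{\|\mathbf{j}\|_\eta=1\}$ is \emph{not} weakly closed, so keeping the normalization in the limit will require a separate homogeneity argument.

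\emph{Step 1: $\tau_1(\eta)$ is finite and positive.} Since $A$ is bounded and $\eta\geq c_0>0$ a.e., every admissible $\mathbf{j}$ obeys $\langle A\mathbf{j},\mathbf{j}\rangle\leq\|A\|\,\|\mathbf{j}\|_{L^2}^2\leq c_0^{-1}\|A\|$, so the supremum in (\ref{minimum}) is finite; and since $A$ is positive definite, $\langle A\mathbf{j},\mathbf{j}\rangle>0$ for any admissible $\mathbf{j}$ (such fields exist, as $\hsol$ is infinite-dimensional for a bounded Lipschitz domain), whence $\tau_1(\eta)>0$. This strict positivity is exactly what will force the constraint to survive passage to the weak limit.

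\emph{Step 2: existence of a maximizer.} I would pick a maximizing sequence $\{\mathbf{j}_k\}\subset\hsol$ with $\|\mathbf{j}_k\|_\eta=1$ and $\langle A\mathbf{j}_k,\mathbf{j}_k\rangle\to\tau_1(\eta)$. From $\eta\geq c_0$ it is bounded in $L^2(\Omega;\R^3)$, so along a subsequence $\mathbf{j}_k\rightharpoonup\mathbf{j}_1$ in $L^2(\Omega;\R^3)$, and $\mathbf{j}_1\in\hsol$ by Lemma \ref{lemma01}. Compactness of $A$ gives $A\mathbf{j}_k\to A\mathbf{j}_1$ strongly in $L^2$, and combining strong with weak convergence yields $\langle A\mathbf{j}_k,\mathbf{j}_k\rangle\to\langle A\mathbf{j}_1,\mathbf{j}_1\rangle$; hence $\langle A\mathbf{j}_1,\mathbf{j}_1\rangle=\tau_1(\eta)$, which is (ii), and $\mathbf{j}_1\neq\mathbf 0$ because $\tau_1(\eta)>0$. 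For the constraint, weak lower semicontinuity of the equivalent Hilbert norm $\|\cdot\|_\eta$ gives $\|\mathbf{j}_1\|_\eta\leq 1$; if $\|\mathbf{j}_1\|_\eta<1$, then $\mathbf{j}_1/\|\mathbf{j}_1\|_\eta$ would be admissible with Rayleigh value $\tau_1(\eta)/\|\mathbf{j}_1\|_\eta^2>\tau_1(\eta)$, contradicting the definition of $\tau_1(\eta)$. Hence $\|\mathbf{j}_1\|_\eta=1$, which is (iii), and together with the above this gives (i).

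\emph{Step 3: Euler--Lagrange equation, and the main obstacle.} Fix $\mathbf{w}\in\hsol$. Since $\hsol$ is a linear space, $\mathbf{j}_1+s\mathbf{w}\in\hsol$ for all $s\in\R$, and $\|\mathbf{j}_1+s\mathbf{w}\|_\eta^2>0$ on a neighbourhood $I$ of $s=0$; on $I$ the smooth quotient $f(s)=\langle A(\mathbf{j}_1+s\mathbf{w}),\mathbf{j}_1+s\mathbf{w}\rangle/\|\mathbf{j}_1+s\mathbf{w}\|_\eta^2$ satisfies $f(s)\leq\tau_1(\eta)=f(0)$ by $0$-homogeneity of the Rayleigh quotient, so $f$ has an interior maximum at $0$ and $f'(0)=0$. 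Using that $A$ is self-adjoint and $\eta$ acts as a symmetric (scalar) multiplication operator, $f'(0)=2\langle A\mathbf{j}_1,\mathbf{w}\rangle-2\tau_1(\eta)\langle\eta\mathbf{j}_1,\mathbf{w}\rangle$, and $f'(0)=0$ is precisely statement (iv). The one genuinely delicate point, and the main obstacle, is that the maximizing sequence is a priori controlled only in $L^2$, so its weak limit could in principle leave $\hsol$ or lose norm; the first is exactly what Lemma \ref{lemma01} prevents, and the second is excluded by combining weak lower semicontinuity with $\tau_1(\eta)>0$ and the homogeneity of the Rayleigh quotient. Everything else is the classical Rayleigh--Ritz computation.
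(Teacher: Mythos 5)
Your proposal is correct and follows essentially the same route as the paper's proof: the direct method (bounded maximizing sequence, weak $L^2$ limit kept in $\hsol$ by Lemma \ref{lemma01}, compactness of $A$ to pass to the limit in the quadratic form, homogeneity of the Rayleigh quotient to recover $\left\Vert\mathbf{j}_1\right\Vert_\eta=1$) followed by the one-parameter perturbation $h_{\mathbf{w}}(t)$ for the Euler--Lagrange equation. Your explicit remark that $\tau_1(\eta)>0$ (hence $\mathbf{j}_1\neq\mathbf{0}$) is a small but welcome addition that the paper leaves implicit when it divides by $\left\Vert\mathbf{j}_1\right\Vert_\eta$.
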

\begin{proof}
By definition of supremum, $\exists\left\{  \mathbf{v}_{j}\right\}_{j\in\mathbb{N}}$ in $\hsol  \subset{L}^{2}\left(  \Omega;\R^3\right)$ such that $\left\Vert \mathbf{v}_{j}\right\Vert _{\eta}=1$ and $\langle A\mathbf{v}_{j},\mathbf{v}_{j}\rangle  \rightarrow\tau_{1}$. 
This sequence is bounded in ${L}^{2}\left(\Omega;\R^3\right)$.
Indeed, if we set $\eta_L:=\mathrm{ess\ inf}_{\Omega}\eta$, then $\left\Vert \mathbf{v}_{j}\right\Vert \leq\left(\eta_L\right)^{-1/2}\left\Vert \mathbf{v}_{j}\right\Vert _{\eta}=\left( \eta_L\right)^{-1/2}$. 
Since $\left\{  \mathbf{v}_{j}\right\}  _{j\in\mathbb{N}}$ is a bounded sequence and ${A}$ is a compact operator, there exists a subsequence $\left\{  \mathbf{v}_{j_{k}}\right\}  _{k\in
\mathbb{N}}$ such that ${A}\mathbf{v}_{j_{k}}\rightarrow\mathbf{z}\in
{L}^{2}\left(\Omega;\R^n\right)$.
Since any bounded sequence in
${L}^{2}\left(\Omega;\R^3\right)  $ admits a weakly converging subsequence, we can extract from $\left\{\mathbf{v}_{j_{k}}\right\}  _{k\in\mathbb{N}}$ another subsequence $\left\{  \mathbf{w}_{l}\right\}_{l\in\mathbb{N}} =\{  \mathbf{v}_{j_{k_l}}\}  _{l\in\mathbb{N}}$ such that $\mathbf{w}_{l}$ $\rightharpoonup\mathbf{j}_{1}$ in
${L}^{2}\left(  \Omega;\R^3\right)$. Summing up, we have proved:
\begin{eqnarray*}\eqalign{
\left\Vert \mathbf{w}_{l}\right\Vert _{\eta}  & =1;\\
\lim_{l\rightarrow+\infty}\langle A  \mathbf{w}_{l},\mathbf{w}_{l}\rangle   &=\tau_{1};\\
\lim_{l\rightarrow+\infty}A\mathbf{w}_{l}  & =\mathbf{z}\in{L}^{2}\left(\Omega;\R^3\right); \\
\lim_{l\rightarrow+\infty}\left\langle \mathbf{w}_{l},\mathbf{w}\right\rangle & =\left\langle \mathbf{j}_{1},\mathbf{w}\right\rangle ,\quad \forall\mathbf{w\in}{L}^{2}\left(  \Omega;\R^3\right).}
\end{eqnarray*}
The claim {\it (i)} follows from the fact that $\mathbf{w}_{l}$ $\rightharpoonup\mathbf{j}_{1}$ in ${L}^{2}\left(\Omega;\R^3\right)  $ implies $\mathbf{j}_{1}\mathbf{\in}%
\hsol$, thanks to Lemma \ref{lemma01}.
\newline In order to prove {\it (ii)}, we observe that $\left\langle
A\mathbf{w}_{l},\mathbf{w}_{l}\right\rangle =\left\langle
A\mathbf{w}_{l}-\mathbf{z},\mathbf{w}_{l}\right\rangle
+\left\langle \mathbf{z},\mathbf{w}_{l}\right\rangle$. Thus
\[
\tau_{1}=\lim_{l\rightarrow+\infty}\langle A \mathbf{w}_l,  \mathbf{w}_{l}\rangle
=\lim_{l\rightarrow+\infty}\left\langle \mathbf{z},\mathbf{w}_{l}\right\rangle
=\left\langle \mathbf{z},\mathbf{j}_{1}\right\rangle .
\]
Furthermore, since $A$ is a self-adjoint operator and $A\mathbf w_l\to\mathbf{z}$, we have
\[\fl
\tau_1=\left\langle \mathbf{z},\mathbf{j}_{1}\right\rangle =\lim_{l\rightarrow
+\infty}\left\langle A\mathbf{w}_{l},\mathbf{j}_{1}\right\rangle
=\lim_{l\rightarrow+\infty}\left\langle \mathbf{w}_{l},A\mathbf{j}_{1}\right\rangle =\left\langle \mathbf{j}_{1},A\mathbf{j}_{1}\right\rangle.
\]
Finally, we can prove {\it (iii)}. 
We first observe that%
\begin{equation*}
\left\Vert \mathbf{j}_{1}\right\Vert _{\eta}^{2}=\lim_{l\rightarrow+\infty
}\left\langle \mathbf{w}_{l},\eta\mathbf{j}_{1}\right\rangle 
\end{equation*}
and that $\left\vert \left\langle \mathbf{w}_{l},\eta\mathbf{j}_{1}%
\right\rangle \right\vert \leq\left\Vert \sqrt{\eta}\mathbf{w}_{l}\right\Vert
\left\Vert \sqrt{\eta}\mathbf{j}_{1}\right\Vert =\left\Vert \mathbf{j}%
_{1}\right\Vert _{\eta}$. 
Thus $\left\langle \mathbf{w}_{l}%
,\eta\mathbf{j}_{1}\right\rangle \leq\left\Vert \mathbf{j}_{1}\right\Vert
_{\eta}$ and%
\[
\left\Vert \mathbf{j}_{1}\right\Vert _{\eta}^{2}=\lim_{l\rightarrow+\infty
}\left\langle \mathbf{w}_{l},\eta\mathbf{j}_{1}\right\rangle \leq\left\Vert
\mathbf{j}_{1}\right\Vert _{\eta}
\]
that is $\left\Vert \mathbf{j}_{1}\right\Vert _{\eta}\leq1$. The reverse inequality follows by observing that
\[
\tau_{1}\geq\left\langle A  \frac{\mathbf{j}_{1}}{\left\Vert \mathbf{j}%
_{1}\right\Vert _{\eta}},\frac{\mathbf{j}_{1}}{\left\Vert \mathbf{j}%
_{1}\right\Vert _{\eta}}\right\rangle  =\frac{1}{\left\Vert \mathbf{j}%
_{1}\right\Vert _{\eta}^2}\langle A  \mathbf{j}_{1}, \mathbf{j}_{1}\rangle  =\frac
{\tau_{1}}{\left\Vert \mathbf{j}_{1}\right\Vert _{\eta}^2}.
\]
Now {\it (iv)} needs to be proved. Let $\mathbf{w\in}\hsol$, we set
\[
h_{\mathbf{w}}\left(  t\right)  =\left\langle A \frac{\mathbf{j}%
_{1}+t\mathbf{w}}{\left\Vert \mathbf{j}_{1}+t\mathbf{w}\right\Vert _{\eta}},  \frac{\mathbf{j}%
_{1}+t\mathbf{w}}{\left\Vert \mathbf{j}_{1}+t\mathbf{w}\right\Vert _{\eta}%
}\right\rangle  .
\]
First, we notice that $\frac{\mathbf{j}_{1}+t\mathbf{w}}{\left\Vert
\mathbf{j}_{1}+t\mathbf{w}\right\Vert _{\eta}}\mathbf{\in}\hsol$ and that the $\left\Vert \mathbf{\cdot}\right\Vert _{\eta}$-norm is
unitary for any real $t$. Thus $h_{\mathbf{w}}\left(  t\right)  \leq\tau_{1}(\Omega)$
and achieves its maximum at $t=0$. As a consequence, $h_{\mathbf{w}}^{\prime
}\left(  0\right)  =0$, which gives%
\[
\left\langle {A}\mathbf{j}_{1},\mathbf{w}\right\rangle -\left\langle {\eta}%
\mathbf{j}_{1},\mathbf{w}\right\rangle \left\langle A
\mathbf{j}_{1},\mathbf{j}_{1}\right\rangle =0,
\]
where the self-adjointness of operator $A$ is used.
Hence, we conclude
\[
\left\langle A\mathbf{j}_{1},\mathbf{w}\right\rangle =\tau_{1}\left\langle\mathcal{\eta}\mathbf{j}_{1},\mathbf{w}\right\rangle\quad \forall\mathbf{w\in}\hsol.
\]
\end{proof}
We remark that these properties have been investigated also for nonlinear and nonlocal operators (see e.g. \cite{piscitelli2016anonlocal} and reference therein).

\subsection{The Higher Eigenvalues}
\label{THE}
Here we characterize higher eigenvalues (and eigenfunctions) for problem (\ref{eq2}) by the means of an iterative process. Various characterizations are possible for higher eigenvalues (see \cite{della2019second,della2017sharp} and reference therein). By induction, we define
\begin{eqnarray}
\label{n-minprob}
\tau_{n}(\eta):=\max_{\left\Vert \mathbf{j}\right\Vert _{\eta}=1,\ \mathbf{j}\in\hsoln}\langle A  \mathbf{j},  \mathbf{j}\rangle\quad n=1,2,...  
\end{eqnarray}
where
\begin{eqnarray*}
 H_L^1\left(  \Omega\right)   & :=\hsol \\
  \hsoln & :=\left\{  \mathbf{v}\in \hsol  |\left\langle \mathbf{v,}\eta\mathbf{j}_{1}\right\rangle=\ldots=\left\langle \mathbf{v,}\eta\mathbf{j}_{n-1}\right\rangle =0\right\}\ n=2,3,...
\end{eqnarray*}
The following properties for (\ref{n-minprob}) hold.
\begin{prop}
\label{lemma04} 
Let $\Omega$ be an open bounded domain with Lipschitz boundary. Then problem (\ref{n-minprob}) admits a maximum for any $n\in\mathbb{N}$, i.e. there exists $\mathbf{j}_{n}{\in}{L}^{2}\left(  \Omega;\R^3\right)  $ such
that\newline(i) $\mathbf{j}_{n}\mathbf{\in} \hsoln
$;\newline(ii) $\langle A  \mathbf{j}_{n},  \mathbf{j}_{n}\rangle  =\tau_{n}(\eta)$%
;\newline(iii) $\left\Vert \mathbf{j}_{n}\right\Vert _{\eta}=1$;
\newline(iv) $\tau_n$ is an eigenvalue of (\ref{eq2}) for $n=2,3,...$ and $\mathbf{j}_n$ is the associated eigenvector:
\begin{equation}
    \label{weak_higher}
\left\langle A\mathbf{j}_{n}, \mathbf{w}\right\rangle =\tau
_{n}(\eta)\left\langle \eta\mathbf{j}_{n},\mathbf{w}\right\rangle \quad
\ \forall\mathbf{w}\in \hsol;
\end{equation}
(v) The elements ${\bf j}_n$ are orthogonal with respect to $\eta$, i.e. $\langle \mathbf{j}_n,\eta\mathbf{j}_{m}\rangle=0\quad \forall\ n\neq m$;
\newline(vi) The elements ${\bf j}_n$ are orthogonal with respect to $A$, i.e. $\langle \mathbf{j}_n,A\mathbf{j}_m\rangle=0\quad \forall\ n\neq m$.
\end{prop}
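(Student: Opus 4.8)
The plan is to mirror the structure of the proof of Proposition \ref{lemma02}, carrying out an induction on $n$. The base case $n=1$ is exactly Proposition \ref{lemma02}, so assume that $\mathbf{j}_1,\ldots,\mathbf{j}_{n-1}$ have been constructed with properties (i)--(iv) and that the orthogonality relations (v)--(vi) hold among them. The first point to settle is that the admissible set $\hsoln$ is still weakly closed in $L^2(\Omega;\R^3)$: it is the intersection of $\hsol$ (weakly closed by Lemma \ref{lemma01}) with the finitely many hyperplanes $\{\mathbf{v}:\langle\mathbf{v},\eta\mathbf{j}_k\rangle=0\}$, each of which is weakly closed since $\eta\mathbf{j}_k\in L^2(\Omega;\R^3)$ is a fixed test function. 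With this in hand, the existence of a maximiser $\mathbf{j}_n$ satisfying (i), (ii), (iii) follows verbatim from the argument in Proposition \ref{lemma02}: take a maximising sequence $\mathbf{v}_j\in\hsoln$ with $\|\mathbf{v}_j\|_\eta=1$, note it is bounded in $L^2$ by $(\eta_L)^{-1/2}$, extract a subsequence along which $A\mathbf{v}_j$ converges strongly (compactness of $A$) and $\mathbf{v}_j\rightharpoonup\mathbf{j}_n$ weakly; weak closedness of $\hsoln$ gives (i), the splitting $\langle A\mathbf{w}_l,\mathbf{w}_l\rangle=\langle A\mathbf{w}_l-\mathbf{z},\mathbf{w}_l\rangle+\langle\mathbf{z},\mathbf{w}_l\rangle$ together with self-adjointness gives (ii), and the weak lower/upper semicontinuity argument for $\|\cdot\|_\eta$ gives (iii). (One should also remark $\tau_n(\eta)>0$, which holds because $A$ is positive definite, so the maximum of a positive quantity over a nonempty set is positive — nonemptiness of $\hsoln$ being clear since it is a finite-codimension subspace of the infinite-dimensional $\hsol$; alternatively $\tau_n\le\tau_{n-1}$ by inclusion of admissible sets.)

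The slightly more delicate part is (iv), the Euler--Lagrange equation, because the admissible set $\hsoln$ only tests against $\mathbf{w}$ orthogonal (w.r.t. $\eta$) to $\mathbf{j}_1,\ldots,\mathbf{j}_{n-1}$, whereas the claim asserts the variational identity for \emph{all} $\mathbf{w}\in\hsol$. I would first run the perturbation argument of Proposition \ref{lemma02}: for $\mathbf{w}\in\hsoln$, the curve $t\mapsto(\mathbf{j}_n+t\mathbf{w})/\|\mathbf{j}_n+t\mathbf{w}\|_\eta$ stays in $\hsoln$ (the constraints are linear and homogeneous, and $\mathbf{j}_n$ itself satisfies them), has unit $\eta$-norm, and hence $h_{\mathbf{w}}(t):=\langle A\frac{\mathbf{j}_n+t\mathbf{w}}{\|\mathbf{j}_n+t\mathbf{w}\|_\eta},\frac{\mathbf{j}_n+t\mathbf{w}}{\|\mathbf{j}_n+t\mathbf{w}\|_\eta}\rangle\le\tau_n$ with equality at $t=0$; differentiating, $h_{\mathbf{w}}'(0)=0$ yields $\langle A\mathbf{j}_n,\mathbf{w}\rangle=\tau_n\langle\eta\mathbf{j}_n,\mathbf{w}\rangle$ for all $\mathbf{w}\in\hsoln$. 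To upgrade this to all of $\hsol$, it suffices to check the identity on each of the excluded directions $\mathbf{w}=\mathbf{j}_k$, $k=1,\ldots,n-1$, since $\hsol$ is the $\eta$-orthogonal direct sum of $\mathrm{span}\{\mathbf{j}_1,\ldots,\mathbf{j}_{n-1}\}$ and $\hsoln$. For $\mathbf{w}=\mathbf{j}_k$ the right-hand side $\tau_n\langle\eta\mathbf{j}_n,\mathbf{j}_k\rangle$ vanishes because $\mathbf{j}_n\in\hsoln$, and the left-hand side $\langle A\mathbf{j}_n,\mathbf{j}_k\rangle=\langle\mathbf{j}_n,A\mathbf{j}_k\rangle$ (self-adjointness) $=\tau_k\langle\mathbf{j}_n,\eta\mathbf{j}_k\rangle=0$, using the inductive hypothesis that $\mathbf{j}_k$ solves the weak equation (\ref{weak_higher}) for index $k$ and again $\mathbf{j}_n\in\hsoln$. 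Hence both sides vanish and (iv) holds on all of $\hsol$.

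Finally (v) and (vi): the $\eta$-orthogonality $\langle\mathbf{j}_n,\eta\mathbf{j}_m\rangle=0$ for $n\ne m$ is immediate from the construction — if $n>m$ then $\mathbf{j}_n\in\hsoln\subseteq\{\mathbf{v}:\langle\mathbf{v},\eta\mathbf{j}_m\rangle=0\}$ by definition, and the relation is symmetric in $n,m$. For the $A$-orthogonality, assume WLOG $n>m$ and compute $\langle\mathbf{j}_n,A\mathbf{j}_m\rangle=\langle A\mathbf{j}_n,\mathbf{j}_m\rangle$ by self-adjointness, then apply (iv) for index $n$ with test function $\mathbf{w}=\mathbf{j}_m\in\hsol$ to get $\langle A\mathbf{j}_n,\mathbf{j}_m\rangle=\tau_n\langle\eta\mathbf{j}_n,\mathbf{j}_m\rangle=0$ by (v). I expect the only genuine obstacle to be the bookkeeping in the upgrade of the Euler--Lagrange equation from $\hsoln$ to $\hsol$ — everything else is a routine transcription of the $n=1$ case — and the cleanest way to present it is via the $\eta$-orthogonal decomposition of $\hsol$ just described, which makes the extension a one-line check on the basis vectors $\mathbf{j}_k$.
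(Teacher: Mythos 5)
Your proposal is correct and follows essentially the same route as the paper: a maximising sequence in $\hsoln$, compactness of $A$ plus weak closedness of the constraint set for (i)--(iii), the perturbation argument to get the Euler--Lagrange identity on $\hsoln$, and the $\eta$-orthogonal decomposition $\hsol=\hsoln\oplus\mathrm{span}\{\mathbf{j}_1,\ldots,\mathbf{j}_{n-1}\}$ together with the identity $\langle A\mathbf{j}_k,\mathbf{j}_n\rangle=\tau_k\langle\eta\mathbf{j}_k,\mathbf{j}_n\rangle=0$ to upgrade it to all of $\hsol$ and deduce (v)--(vi). The only cosmetic difference is that you check the extension on the basis vectors $\mathbf{j}_k$ while the paper expands an arbitrary $\mathbf{v}$ in the same decomposition; the induction you make explicit is left implicit in the paper.
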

\begin{proof}
Similarly to Proposition \ref{lemma02}, it can be shown that for any $n\in\mathbb N$ there exists a sequence $\left\{\mathbf{v}_{l}\right\}  _{l\in\mathbb{N}}$ in $\hsoln$ such that
\begin{eqnarray*}
\left\Vert \mathbf{v}_{l}\right\Vert _{\eta}   =1;\\
\lim_{l\rightarrow+\infty}\langle A  \mathbf{v}_{l},  \mathbf{v}_{l}\rangle   =\tau_{n};\\
\lim_{l\rightarrow+\infty}A\mathbf{v}_{l} =\mathbf{z}\in{L}^{2}\left(\Omega;\R^3\right) ;\\
\lim_{l\rightarrow+\infty}\left\langle \mathbf{v}_{l},\mathbf{w}\right\rangle =\left\langle \mathbf{j}_{n},\mathbf{w}\right\rangle \quad \forall\mathbf{w\in}{L}^{2}\left(\Omega;\R^3\right).
\end{eqnarray*}
To prove {\it (i)}, we notice that $\mathbf{v}_{l}$ $\rightharpoonup\mathbf{j}_{n} $
in ${L}^{2}\left(  \Omega;\R^3\right)  $, thus $\mathbf{j}_{n}\mathbf{\in}%
\hsol  $ thanks to Lemma \ref{lemma01}. Moreover,
$\left\langle \mathbf{v}_{l}\mathbf{,}\eta\mathbf{j}_{m}\right\rangle =0$ for
$m=1,\ldots,n-1$. Thus,
\[
\left\langle \mathbf{j}_{n},\eta\mathbf{j}_{m}\right\rangle =\lim
_{l\rightarrow+\infty}\left\langle \mathbf{v}_{l},\eta\mathbf{j}%
_{m}\right\rangle =0,\ m=1,\ldots,n-1
\]
that is $\mathbf{j}_{n}\mathbf{\in}\hsol   $.\newline The
proofs of {\it (ii)} and {\it (iii)} follow those provided in Proposition \ref{lemma02}%
.\newline The proof of {\it (iv)} follows the approach of Proposition \ref{lemma02}{\it (iv)}. However,
with that approach it can be proved that $\left\langle A\mathbf{j}_{n},\mathbf{w}\right\rangle =\tau_{n}\left\langle \eta\mathbf{j}_{n},\mathbf{w}\right\rangle\ \forall\mathbf{w}\in\hsoln$. In order to prove that {\it (iv)} holds in $\hsol$, we notice that $\hsol =\hsoln
\oplus span\left\{  \mathbf{j}_{1},\ldots,\mathbf{j}_{n-1}\right\}  $, that is
any $\mathbf{v}\mathbf{\in}\hsol$ can be written as
\[
\mathbf{v=v}^{n}+\sum_{m=1}^{n-1}\lambda_{m}\mathbf{j}_{m}\quad \textrm{where}\quad \mathbf{v}
^{n}\in \hsoln,\ \lambda_{m}\in\mathbb{R}.
\]
Therefore, it follows that%
\begin{eqnarray*}
\left\langle \mathbf{v},A\mathbf{j}_{n}\right\rangle  &
=\left\langle \mathbf{v}^{n}+\sum_{m=1}^{n-1}\lambda_{m}\mathbf{j}%
_{m},A\mathbf{j}_{n}\right\rangle \\
& =\left\langle \mathbf{v}^{n},A\mathbf{j}_{n}\right\rangle
+\sum_{m=1}^{n-1}\lambda_{m}\left\langle \mathbf{j}_{m},A\mathbf{j}_{n}\right\rangle \\
& =\tau_{n}\left\langle \mathbf{v},\eta\mathbf{j}_{n}\right\rangle
,\ \forall\mathbf{v}\mathbf{\in}\hsol ,
\end{eqnarray*}
where we have exploited $\left\langle \mathbf{j}_{m},A\mathbf{j}_{n}\right\rangle =\left\langle A\mathbf{j}%
_{m},\mathbf{j}_{n}\right\rangle =\tau_{m}\left\langle \eta\mathbf{j}%
_{m},\mathbf{j}_{n}\right\rangle =0$ and $\left\langle \mathbf{v}%
^{n},A\mathbf{j}_{n}\right\rangle =\tau_n\left\langle \mathbf{v}%
^{n},\eta\mathbf{j}_{n}\right\rangle =\tau_n\left\langle \mathbf{v},\eta
\mathbf{j}_{n}\right\rangle $. This latter statement gives {\it (v)} and {\it (vi)}.
\end{proof}

The eigenvalues $\{\tau_n(\eta)\}_{n\in\N}$ form a monotonically decreasing sequence converging to zero.
\begin{prop}\label{infinitesima}
The sequence $\left\{  \tau_{n}(\eta)\right\}  _{n\in%
\mathbb{N}
}$ is monotone and decreasing to $0$.
\end{prop}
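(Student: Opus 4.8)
The plan is to argue that the sequence $\{\tau_n(\eta)\}_{n\in\N}$ is monotone decreasing directly from the nested variational characterization, and then to deduce the convergence to zero from the compactness of $A$. Monotonicity is almost immediate: since $\hsol^{n+1}\subset\hsol^{n}$ for every $n$, the maximum defining $\tau_{n+1}(\eta)$ in \eqref{n-minprob} is taken over a smaller admissible set than that defining $\tau_n(\eta)$, hence $\tau_{n+1}(\eta)\le\tau_n(\eta)$. Combined with $\tau_n(\eta)>0$ (which follows from $\langle A\mathbf{j}_n,\mathbf{j}_n\rangle>0$ because $A$ is positive definite and $\mathbf{j}_n\neq\mathbf 0$), the sequence is decreasing and bounded below by $0$, so it converges to some limit $\tau_\infty\ge 0$; it remains to show $\tau_\infty=0$.

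To show $\tau_\infty=0$ I would argue by contradiction, exploiting the compactness of $A$ together with property (v) of Proposition \ref{lemma04}, i.e. the $\eta$-orthonormality of the eigenfunctions $\{\mathbf{j}_n\}$. Suppose $\tau_\infty>0$. The sequence $\{\mathbf{j}_n\}$ is bounded in $L^2(\Omega;\R^3)$ because $\|\mathbf{j}_n\|\le\eta_L^{-1/2}\|\mathbf{j}_n\|_\eta=\eta_L^{-1/2}$, where $\eta_L=\mathrm{ess\,inf}_\Omega\eta>0$. Since $A$ is compact, passing to a subsequence we may assume $A\mathbf{j}_n\to\mathbf{z}$ strongly in $L^2(\Omega;\R^3)$; in particular $\{A\mathbf{j}_n\}$ is Cauchy. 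On the other hand, using the eigenvalue equation \eqref{weak_higher} with $\mathbf{w}=\mathbf{j}_n$ and $\mathbf{w}=\mathbf{j}_m$ together with the $\eta$-orthogonality $\langle\eta\mathbf{j}_n,\mathbf{j}_m\rangle=\delta_{nm}$, one computes for $n\neq m$
\begin{eqnarray*}
\|A\mathbf{j}_n-A\mathbf{j}_m\|_\eta^{2}
&= \langle \eta^{-1}(A\mathbf{j}_n-A\mathbf{j}_m), A\mathbf{j}_n-A\mathbf{j}_m\rangle.
\end{eqnarray*}
It is cleaner to estimate in a norm adapted to $A$ and $\eta$: from \eqref{eq2}, $A\mathbf{j}_n=\tau_n\,\eta\mathbf{j}_n$ in the weak sense tested against $\hsol$, so that $\langle A\mathbf{j}_n,\mathbf{j}_m\rangle=\tau_n\langle\eta\mathbf{j}_n,\mathbf{j}_m\rangle=\tau_n\delta_{nm}$ and likewise $\langle A\mathbf{j}_n,A\mathbf{j}_m\rangle=\tau_m\langle A\mathbf{j}_n,\eta\mathbf{j}_m\rangle$. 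The quickest route is: test \eqref{weak_higher} for index $n$ against $\mathbf{w}=\mathbf{j}_n$ to get $\langle A\mathbf{j}_n,\mathbf{j}_n\rangle=\tau_n$, and observe that the functions $\mathbf{u}_n:=\sqrt{\eta}\,\mathbf{j}_n$ are orthonormal in $L^2$, while $\langle A\mathbf{j}_n,\mathbf{j}_n\rangle\ge\tau_\infty>0$ for all $n$. Since $\mathbf{u}_n\rightharpoonup 0$ weakly (an infinite orthonormal sequence converges weakly to zero), and $A$ composed with multiplication by $\sqrt{\eta}$ is compact as an operator on $L^2$, we get $\|A\mathbf{j}_n\|\to$ contradiction with $\langle A\mathbf{j}_n,\mathbf{j}_n\rangle$ staying bounded away from zero. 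Concretely: write $\tau_n=\langle A\mathbf{j}_n,\mathbf{j}_n\rangle=\langle \sqrt{\eta}^{-1}A\mathbf{j}_n,\sqrt{\eta}\,\mathbf{j}_n\rangle=\langle \sqrt{\eta}^{-1}A\mathbf{j}_n,\mathbf{u}_n\rangle$; the operator $T:\mathbf{u}\mapsto \sqrt{\eta}^{-1}A(\sqrt{\eta}^{-1}\mathbf{u})$ is compact on $L^2$ (composition of the bounded multiplication $\sqrt{\eta}^{-1}$ with the compact $A$), so $T\mathbf{u}_n\to 0$ strongly because $\mathbf{u}_n\rightharpoonup 0$, whence $\tau_n=\langle T\mathbf{u}_n,\mathbf{u}_n\rangle\le\|T\mathbf{u}_n\|\,\|\mathbf{u}_n\|=\|T\mathbf{u}_n\|\to 0$. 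This forces $\tau_\infty=0$.

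I expect the main obstacle to be phrasing the compactness argument cleanly, since $A$ as introduced in \eqref{operat} is compact as a map into $L^2(\Omega;\R^3)$ but the eigenvalue problem \eqref{eq2} lives on the subspace $\hsol$ with the weight $\eta$; one must be careful that multiplication by $\sqrt{\eta}^{\pm 1}$ is bounded on $L^2$ (true since $\eta\in L^\infty_+(\Omega)$, so $\eta_L\le\eta\le\|\eta\|_{L^\infty}$), that an infinite $\eta$-orthonormal system maps to an $L^2$-orthonormal system $\{\mathbf{u}_n\}$ which therefore converges weakly to zero, and that $\hsol$ being weakly closed (Lemma \ref{lemma01}) is not even needed here — only the weak convergence $\mathbf{u}_n\rightharpoonup 0$ in the ambient $L^2$ and the compactness of the composed operator. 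Once these bookkeeping points are in place, the contradiction $\tau_n\to 0$ versus $\tau_n\ge\tau_\infty>0$ closes the proof.
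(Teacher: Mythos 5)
Your proof is correct, and the second half takes a genuinely different (and cleaner) route than the paper's. Both arguments rest on the same ingredients --- the nested-sets monotonicity, the bound $\|\mathbf{j}_n\|\le\eta_L^{-1/2}$, the compactness of $A$, the $\eta$-orthonormality of the eigenfunctions, and the identity $\tau_n=\langle A\mathbf{j}_n,\mathbf{j}_n\rangle$ --- but they extract the conclusion differently. The paper passes to a subsequence with $A\mathbf{j}_{n_k}\to\mathbf{z}$, uses the Cauchy property tested against the ad hoc function $\mathbf{w}=\tau_{n_k}\mathbf{j}_{n_k}-\tau_{n_l}\mathbf{j}_{n_l}$ to derive $\tau_{n_k}^2+\tau_{n_l}^2\le\varepsilon(\tau_{n_k}+\tau_{n_l})/\sqrt{\eta_L}$, and then takes a double limit followed by $\varepsilon\to0$. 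You instead observe that $\mathbf{u}_n=\sqrt{\eta}\,\mathbf{j}_n$ is an infinite $L^2$-orthonormal system, hence $\mathbf{u}_n\rightharpoonup 0$ (equivalently $\mathbf{j}_n\rightharpoonup 0$, since multiplication by $\eta^{-1/2}\in L^\infty$ preserves weak convergence), so compactness turns this into strong convergence and $\tau_n=\langle A\mathbf{j}_n,\mathbf{j}_n\rangle\le\|A\mathbf{j}_n\|\,\|\mathbf{j}_n\|\to0$ for the \emph{whole} sequence --- no subsequence extraction, no contradiction framing needed (in effect you identify the paper's limit $\mathbf{z}$ as $\mathbf{0}$). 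Two small points of hygiene: the abandoned intermediate computations (e.g.\ $\langle A\mathbf{j}_n,A\mathbf{j}_m\rangle=\tau_m\langle A\mathbf{j}_n,\eta\mathbf{j}_m\rangle$, which would require $A\mathbf{j}_n\in\hsol$, not guaranteed) should simply be deleted; and to claim the conjugated operator $T=\eta^{-1/2}A\,\eta^{-1/2}$ is compact \emph{on all of} $L^2(\Omega;\R^3)$ you implicitly extend $A$ beyond $\hsol$ --- legitimate here because the weakly singular kernel in (\ref{operat}) is Hilbert--Schmidt on a bounded $\Omega\subset\R^3$, but it is even simpler to skip $T$ altogether and argue directly with $\mathbf{j}_n\rightharpoonup 0$ in $L^2$ and the compactness of $A$ as given.
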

\begin{proof}
In order to prove that $\tau_{n}\geq\tau_{n+1}$ it is sufficient to observe that the
sets $S_{n}=\left\{  \mathbf{v}\in\hsoln |\left\Vert
\mathbf{v}\right\Vert _{\eta}=1\right\}  $ form a nested family. Indeed,
$S_{n}\supseteq S_{n+1}$ and, thus%
\[
\tau_{n}=\sup_{\mathbf{v}\in S_{n}}\langle  A\mathbf{v}, \mathbf{v}\rangle
\geq\sup_{\mathbf{v}\in S_{n+1}}\langle A  \mathbf{v},   \mathbf{v} \rangle
=\tau_{n+1}.
\]
Since the sequence $\left\{  \tau_{n}\right\}  _{n\in%
\mathbb{N}
}$ is monotonically decreasing and non-negative, its limit exists and it is non-negative:%
\[
\lim_{n\rightarrow+\infty}\tau_{n}=\alpha\geq0.
\]
In order to prove that $\alpha=0$ we observe that sequence $\left\{  \mathbf{j}%
_{n}\right\}  _{n\in%
\mathbb{N}
}$ is bounded in ${L}^{2}\left(  \Omega;\R^3\right)  $. Indeed, $1=\left\Vert
\mathbf{j}_{n}\right\Vert _{\eta}=\left\Vert \sqrt{\eta}\mathbf{j}%
_{n}\right\Vert \geq\sqrt{\eta_{L}}\left\Vert \mathbf{j}_{n}\right\Vert $,
i.e. $\left\Vert \mathbf{j}_{n}\right\Vert \leq1/\sqrt{\eta_{L}}$, where $\eta_L=\textrm{ess}\inf\eta$. Since
$\left\{  \mathbf{j}_{n}\right\}  _{n\in%
\mathbb{N}
}$ is bounded in ${L}^{2}\left(  \Omega;\R^3\right)  $ and $A$ is
compact, there exists a subsequence $\left\{  \mathbf{j}_{n_{k}}\right\}
_{k\in%
\mathbb{N}
}$ such that $A\mathbf{j}_{n_{k}}\rightarrow\mathbf{z\in
}{L}^{2}\left(  \Omega;\R^3\right)  $. Therefore, $\left\{  A\mathbf{j}_{n_{k}}\right\}  _{k\in%
\mathbb{N}
}$ is a Cauchy sequence:
\begin{eqnarray*}
\forall\varepsilon>0\ \exists M_{\varepsilon}\in\mathbb{N}\ \textrm{such that}\ \left\Vert A\mathbf{j}_{n_{k}}-A\mathbf{j}_{n_{l}}\right\Vert <\varepsilon\ \forall k,l>M_{\varepsilon}.
\end{eqnarray*}
As a consequence,%
\[
\left\vert \left\langle A\mathbf{j}_{n_{k}%
}-A\mathbf{j}_{n_{l}},\mathbf{w}\right\rangle \right\vert \leq
\varepsilon\left\Vert \mathbf{w}\right\Vert .
\]
By choosing $\mathbf{w}$ in $\hsol$ we have%
\[
\left\vert \left\langle \tau_{n_{k}}\eta\mathbf{j}_{n_{k}}%
-\tau_{n_{l}}\eta\mathbf{j}_{n_{l}},\mathbf{w}\right\rangle \right\vert \leq
\varepsilon\left\Vert \mathbf{w}\right\Vert ,
\]
thus, for $\mathbf{w}=\tau_{n_{k}}\mathbf{j}_{n_{k}}-\tau_{n_{l}}%
\mathbf{j}_{n_{l}}$ we have%
\[
\left\vert \left\langle \eta\left(  \tau_{n_{k}}\mathbf{j}_{n_{k}}-\tau_{n_{l}%
}\mathbf{j}_{n_{l}}\right) ,\tau_{n_{k}}\mathbf{j}_{n_{k}}-\tau_{n_{l}}%
\mathbf{j}_{n_{l}} \right\rangle \right\vert \leq\varepsilon
\left\Vert \tau_{n_{k}}\mathbf{j}_{n_{k}}-\tau_{n_{l}}\mathbf{j}_{n_{l}%
}\right\Vert
\]
which gives%
\begin{eqnarray*}
\tau_{n_{k}}^{2}+\tau_{n_{l}}^{2}  & \leq\varepsilon\left(  \tau_{n_{k}%
}\left\Vert \mathbf{j}_{n_{k}}\right\Vert +\tau_{n_{l}}\left\Vert
\mathbf{j}_{n_{l}}\right\Vert \right) \\
& \leq\frac{\varepsilon}{\sqrt{\eta_{L}}}\left(  \tau_{n_{k}}+\tau_{n_{l}%
}\right)  \ \forall k,l>M_{\varepsilon}.
\end{eqnarray*}
In the limit for $k\rightarrow+\infty$ we have $\alpha^{2}+\tau_{n_{l}}%
^{2}\leq\varepsilon\left(  \alpha+\tau_{n_{l}}\right)  /\sqrt{\eta_{L}%
}$, \ $\forall l>M_{\varepsilon}$. In the limit for $l\rightarrow+\infty$ we have
$2\alpha^{2}\leq 2\varepsilon\alpha/\sqrt{\eta_{L}}$, i.e. $0\leq\alpha
\leq\varepsilon/\sqrt{\eta_{L}}$. From the arbitrariness of $\varepsilon>0$, it
turns out that $\alpha=0$.
\end{proof}


Sequence $\{{\bf j}_n\}_{n\in\N}$ is important because its elements form a complete basis which can be used to represent any element of $\hsol$ in terms of a Fourier series.

\begin{prop}\label{prop_complete}
The sequence
$\left\{  \mathbf{j}_{n}\right\}  _{n\in%
\mathbb{N}
}$ of the maximizers of (\ref{n-minprob}) forms a complete basis in $\hsol$.
\end{prop}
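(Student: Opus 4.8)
The plan is to show that the only element of $\hsol$ that is $\eta$-orthogonal to every $\mathbf{j}_n$ is the zero vector; since the $\{\mathbf{j}_n\}$ are already $\eta$-orthonormal (Proposition \ref{lemma04}(iii),(v)), this is equivalent to completeness. So suppose $\mathbf{v}\in\hsol$ satisfies $\langle\mathbf{v},\eta\mathbf{j}_n\rangle=0$ for all $n\in\N$. Then by the very definition of the spaces $\hsoln$, we have $\mathbf{v}\in\hsoln$ for every $n$, and hence, using the variational characterization (\ref{n-minprob}) together with the self-adjointness and positivity of $A$,
\[
0\le\frac{\langle A\mathbf{v},\mathbf{v}\rangle}{\|\mathbf{v}\|_\eta^2}\le\tau_n(\eta)\quad\forall n\in\N,
\]
provided $\mathbf{v}\neq\mathbf 0$ (so that we may normalize $\mathbf v/\|\mathbf v\|_\eta$ and use it as a competitor in the maximization defining $\tau_n$). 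Letting $n\to\infty$ and invoking Proposition \ref{infinitesima}, which gives $\tau_n(\eta)\to 0$, we conclude $\langle A\mathbf{v},\mathbf{v}\rangle=0$. Since $A$ is positive definite, this forces $\mathbf{v}=\mathbf 0$, a contradiction.

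The next step is to translate "$\langle\mathbf v,\eta\mathbf j_n\rangle=0$ for all $n$ implies $\mathbf v=\mathbf0$" into the statement that every $\mathbf u\in\hsol$ admits a convergent Fourier expansion $\mathbf u=\sum_n\langle\mathbf u,\eta\mathbf j_n\rangle\,\mathbf j_n$ in the $\|\cdot\|_\eta$-norm (equivalently in $L^2(\Omega;\R^3)$, since the two norms are equivalent on account of $\eta\in L^\infty_+(\Omega)$). Given $\mathbf u\in\hsol$, set $a_n:=\langle\mathbf u,\eta\mathbf j_n\rangle$ and let $\mathbf s_N:=\sum_{n=1}^N a_n\mathbf j_n$ be the partial sums. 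By $\eta$-orthonormality, $\|\mathbf u-\mathbf s_N\|_\eta^2=\|\mathbf u\|_\eta^2-\sum_{n=1}^N a_n^2$, so Bessel's inequality $\sum_n a_n^2\le\|\mathbf u\|_\eta^2$ holds and $\{\mathbf s_N\}$ is $\|\cdot\|_\eta$-Cauchy; let $\mathbf s$ be its limit, which lies in $\hsol$ because $\hsol$ is closed in $L^2(\Omega;\R^3)$ (it is weakly closed by Lemma \ref{lemma01}, hence closed). The difference $\mathbf r:=\mathbf u-\mathbf s$ then satisfies $\langle\mathbf r,\eta\mathbf j_m\rangle=a_m-a_m=0$ for every $m$, so by the first step $\mathbf r=\mathbf 0$, i.e. $\mathbf u=\mathbf s=\sum_n a_n\mathbf j_n$. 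This establishes that $\{\mathbf j_n\}_{n\in\N}$ is a complete orthonormal (with respect to $\langle\eta\,\cdot,\cdot\rangle$) basis of $\hsol$.

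I expect the main obstacle to be a subtle degeneracy: the argument above tacitly assumes that the construction in Proposition \ref{lemma04} never terminates, i.e. that $\hsoln\neq\{\mathbf0\}$ for every $n$, so that infinitely many eigenpairs $(\tau_n,\mathbf j_n)$ are actually produced. If for some finite $N$ one had $\hsol^N(\Omega)=\{\mathbf0\}$, then $\hsol$ would be finite-dimensional and spanned by $\mathbf j_1,\dots,\mathbf j_{N-1}$, and the statement would hold trivially; but one should note that the interesting (infinite-dimensional) case is the one requiring Proposition \ref{infinitesima}. The one genuinely delicate point in the generic case is the use of $\mathbf v/\|\mathbf v\|_\eta$ as an admissible competitor in (\ref{n-minprob}): this is legitimate precisely because $\mathbf v\in\hsoln$ for all $n$ and $\|\mathbf v/\|\mathbf v\|_\eta\|_\eta=1$, so the chain of inequalities $\langle A\mathbf v,\mathbf v\rangle/\|\mathbf v\|_\eta^2\le\tau_n(\eta)$ is valid for every $n$ and we may pass to the limit. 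Everything else — Bessel's inequality, the Cauchy property of partial sums, closedness of $\hsol$ — is routine Hilbert-space bookkeeping.
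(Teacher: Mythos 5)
Your proof is correct and its core argument is the same as the paper's: an element $\eta$-orthogonal to all $\mathbf{j}_n$ lies in $\hsoln$ for every $n$, so its Rayleigh quotient is squeezed between $0$ and $\tau_n(\eta)\to 0$, contradicting the positive definiteness of $A$. The additional Bessel/Cauchy/closedness bookkeeping you supply simply makes explicit the orthogonal decomposition $\mathbf{v}=\mathbf{v}_1+\mathbf{v}_2$ that the paper's proof takes for granted, so the two arguments are essentially identical.
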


\begin{proof}
Let us assume that there exists $\mathbf{v\in}\hsol
\backslash \textrm{span}\left\{  \mathbf{j}_{n}\right\}  _{n\in%
\mathbb{N}
}$. Let us decompose $\mathbf{v}$ as $\mathbf{v=v}_{1}+\mathbf{v}_{2}$ with
$\mathbf{v}_{1}\in \textrm{span}\left\{  \mathbf{j}_{n}\right\}  _{n\in%
\mathbb{N}
}$\ and $\mathbf{v}_{2}\neq\mathbf{0}$ such that $\left\langle \mathbf{v}_{2},\eta
\mathbf{j}_{n}\right\rangle =0$, $\forall\ n\in%
\mathbb{N}
$. Thanks to the latter, $\mathbf{v}_{2}\in \hsoln $
$\forall n\in%
\mathbb{N}
$, thus%
\[
\tau_{n}=\sup_{\left\Vert \mathbf{v}\right\Vert _{\eta}=1,\ \mathbf{v}\in
\hsoln  }\langle A \mathbf{v}, \mathbf{v}\rangle
\geq\left\langle A  \frac{\mathbf{v}_{2}}{\left\Vert \mathbf{v}%
_{2}\right\Vert _{\eta}},  \frac{\mathbf{v}_{2}}{\left\Vert \mathbf{v}%
_{2}\right\Vert _{\eta}}\right\rangle  >0
\]
where the last inequality comes from the positive definiteness of
${A}$. In the limits for $n\rightarrow+\infty$ we get $0>0$ which
is a contradiction. Thus, $\mathbf{v}_{2}=\mathbf{0}$, i.e. $\left\{
\mathbf{j}_{n}\right\}  _{n\in%
\mathbb{N}
}$ is complete basis in $\hsol$.
\end{proof}

\section{Monotonicity of eigenvalues}\label{MoE}

In this section we prove the Monotonicity Principle for the time constants. In order to achieve this, we first derive a variational characterization of the time constants in a form slightly different from that in (\ref{n-minprob}). Specifically, we derive a variational characterization (\ref{second_charac}) having two specific features: (i) it involves a finite dimensional space, rather than an infinite dimensional space and (ii) the set of admissible functions does not depend upon the electrical resistivity $\eta$. The first result is obtained in Lemma \ref{second_lem1} whereas the second result is achieved in Lemma \ref{second_lem2}.

\subsection{Variational Characterization}

Let the linear space $U_{n}$ be defined as $U_{n}=$span$\left\{
\mathbf{j}_{1},\mathbf{j}_{2},\ldots,\mathbf{j}_{n}\right\}  $.

\begin{lem}
\label{second_lem1}
The following variational characterization of $\tau_{n}(\eta)$ holds:%
\begin{equation}
\tau_{n}(\eta)=\min_{\mathbf{j}\in U_{n}}\frac{\langle A\mathbf{j}, \mathbf{j}\rangle
}{||  \mathbf{j}||_\eta^2 }.\label{eq07}%
\end{equation}
\end{lem}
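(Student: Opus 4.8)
The statement claims that restricting the Rayleigh-type quotient $\langle A\mathbf{j},\mathbf{j}\rangle / \|\mathbf{j}\|_\eta^2$ to the $n$-dimensional space $U_n = \mathrm{span}\{\mathbf{j}_1,\ldots,\mathbf{j}_n\}$, the \emph{minimum} equals $\tau_n(\eta)$, the $n$-th (decreasingly ordered) eigenvalue. The plan is to expand an arbitrary $\mathbf{j}\in U_n$ in the $\eta$-orthonormal basis $\{\mathbf{j}_1,\ldots,\mathbf{j}_n\}$ of $U_n$ and compute both the numerator and the denominator explicitly, then optimize over the coefficients.

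\textbf{Key steps.} First, write $\mathbf{j} = \sum_{k=1}^n a_k \mathbf{j}_k$ with $a_k\in\R$. By Proposition \ref{lemma04}(iii) and (v), $\{\mathbf{j}_k\}$ is $\eta$-orthonormal, so $\|\mathbf{j}\|_\eta^2 = \langle \eta\mathbf{j},\mathbf{j}\rangle = \sum_{k=1}^n a_k^2$. Second, using $\eta$-orthogonality together with the eigenrelation \eqref{weak_higher} (which gives $\langle A\mathbf{j}_k,\mathbf{j}_m\rangle = \tau_k(\eta)\langle\eta\mathbf{j}_k,\mathbf{j}_m\rangle = \tau_k(\eta)\delta_{km}$) and the $A$-orthogonality from Proposition \ref{lemma04}(vi), I get $\langle A\mathbf{j},\mathbf{j}\rangle = \sum_{k=1}^n \tau_k(\eta)\, a_k^2$. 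Third, the quotient therefore becomes
\[
\frac{\langle A\mathbf{j},\mathbf{j}\rangle}{\|\mathbf{j}\|_\eta^2} = \frac{\sum_{k=1}^n \tau_k(\eta)\, a_k^2}{\sum_{k=1}^n a_k^2},
\]
a convex combination of $\tau_1(\eta),\ldots,\tau_n(\eta)$ whenever $\mathbf{j}\neq\mathbf 0$. Since the sequence $\{\tau_k(\eta)\}$ is decreasing (Proposition \ref{infinitesima}), the minimum of this weighted average over all admissible coefficient vectors is $\tau_n(\eta)$, attained at $a_1=\cdots=a_{n-1}=0$, $a_n=1$, i.e. at $\mathbf{j}=\mathbf{j}_n$. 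This proves both that the infimum equals $\tau_n(\eta)$ and that it is actually a minimum.

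\textbf{Main obstacle.} There is no serious obstacle here; the only care needed is bookkeeping. One should note that $U_n\subset\hsol$ consists of genuinely $(n$-dimensional$)$ data because the $\mathbf{j}_k$ are $\eta$-orthonormal hence linearly independent, so the denominator never vanishes on $U_n\setminus\{\mathbf 0\}$ and the quotient is well defined. One must also be slightly careful that the relation $\langle A\mathbf{j}_k,\mathbf{j}_m\rangle=\tau_k(\eta)\delta_{km}$ uses \eqref{weak_higher} with test function $\mathbf{w}=\mathbf{j}_m\in\hsol$, which is legitimate since \eqref{weak_higher} holds for all $\mathbf{w}\in\hsol$. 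Everything else is the elementary fact that a weighted average of a finite decreasing list is minimized by concentrating all weight on the last (smallest) entry.
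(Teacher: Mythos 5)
Your proof is correct and follows essentially the same route as the paper: the paper diagonalizes the Rayleigh quotient on $U_n$ via the matrices $A_{ij}=\langle A\mathbf{j}_i,\mathbf{j}_j\rangle=\tau_i\delta_{ij}$ and $B_{ij}=\langle\eta\mathbf{j}_i,\mathbf{j}_j\rangle=\delta_{ij}$, which is exactly your expansion $\langle A\mathbf{j},\mathbf{j}\rangle/\|\mathbf{j}\|_\eta^2=\sum_k\tau_k a_k^2/\sum_k a_k^2$. You additionally spell out the final step (a weighted average of a decreasing finite list is minimized by concentrating on the last entry), which the paper leaves as ``it immediately follows.''
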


\begin{proof}
First, we notice that problem (\ref{eq07}) can be cast in terms of a generalized
eigenvalues problem for matrices. To this end, we express the elements of
$U_{n}$ as%
\[
\mathbf{v=}\sum_{i=1}^{n}x_{i}\mathbf{j}_{i},\ x_{i}\in%
\mathbb{R}
.
\]
Then, we define%
\begin{eqnarray*}
A_{ij}  & =\left\langle A\mathbf{j}_{i},\mathbf{j}%
_{j}\right\rangle \\
B_{ij}  & =\left\langle \eta\mathbf{j}_{i},\mathbf{j}_{j}\right\rangle .
\end{eqnarray*}
Since $U_{n}\sim%
\mathbb{R}
^{n}$ and%
\[
\frac{\langle A \mathbf{v}, \mathbf{v}\rangle}{||
\mathbf{v}||_\eta^2 }=\frac{\mathbf{x}^{T}\mathbf{Ax}}{\mathbf{x}%
^{T}\mathbf{Bx}},
\]
we have 
\[
\min_{\mathbf{v}\in U_{n}}\frac{\langle A  \mathbf{v}, \mathbf{v}\rangle
}{|| \mathbf{v}||_\eta^2}=\min_{\mathbf{x}\in%
\mathbb{R}
^{n}}\frac{\mathbf{x}^{T}\mathbf{Ax}}{\mathbf{x}^{T}\mathbf{Bx}}.
\]
Thanks to Proposition \ref{lemma04}, we have
\begin{eqnarray*}A_{ij}  & =\tau_{i}\delta_{ij}\\B_{ij}  & =\delta_{ij},
\end{eqnarray*}
where $\delta_{ij}$ is the Kronecker delta. From this, it immediately follows
that%
\begin{eqnarray}
\label{finite_charac}
\min_{\mathbf{x}\in%
\mathbb{R}
^{n}}\frac{\mathbf{x}^{T}\mathbf{Ax}}{\mathbf{x}^{T}\mathbf{Bx}}=\tau_{n}.
\end{eqnarray}

\end{proof}
We observe that the variational characterization can also be cast in terms of matrices (see (\ref{finite_charac})).
 
\begin{lem}
\label{second_lem2}The following max-min variational characterization of $\tau
_{n}(\eta)$ holds:%
\begin{eqnarray}
\label{second_charac}
\tau_{n}(\eta)=\max_{\dim\left(  U\right)  =n}\min_{\mathbf{j}\in U}\frac
{\langle A  \mathbf{j}, \mathbf{j}\rangle  }{|| \mathbf{j}||_\eta^2
}.
\end{eqnarray}
\end{lem}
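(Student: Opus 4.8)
The plan is to derive the max-min characterization \eqref{second_charac} from the already-established characterization \eqref{eq07} of Lemma~\ref{second_lem1}. The inequality $\tau_n(\eta) \leq \max_{\dim(U) = n} \min_{\mathbf{j} \in U} \langle A\mathbf{j},\mathbf{j}\rangle / \|\mathbf{j}\|_\eta^2$ is immediate: the space $U_n = \mathrm{span}\{\mathbf{j}_1,\ldots,\mathbf{j}_n\}$ has dimension exactly $n$ (the $\mathbf{j}_i$ are $\eta$-orthonormal by Proposition~\ref{lemma04}, hence linearly independent), so it is an admissible competitor in the outer maximization, and by Lemma~\ref{second_lem1} the inner minimum over $U_n$ equals $\tau_n(\eta)$. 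Thus the right-hand side is at least $\tau_n(\eta)$.

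For the reverse inequality I need to show that for \emph{every} $n$-dimensional subspace $U \subseteq \hsol$, the inner minimum $\min_{\mathbf{j}\in U} \langle A\mathbf{j},\mathbf{j}\rangle / \|\mathbf{j}\|_\eta^2$ is at most $\tau_n(\eta)$. The standard dimension-counting argument applies: given such a $U$, consider the subspace $\hsoln = \{\mathbf{v}\in\hsol \mid \langle \mathbf{v},\eta\mathbf{j}_1\rangle = \cdots = \langle\mathbf{v},\eta\mathbf{j}_{n-1}\rangle = 0\}$ used to define $\tau_n(\eta)$ in \eqref{n-minprob}. The constraints defining $\hsoln$ within $\hsol$ are $n-1$ linear conditions, so intersecting with the $n$-dimensional $U$ must leave a nonzero vector: there exists $\mathbf{j}^\ast \in U \cap \hsoln$ with $\mathbf{j}^\ast \neq \mathbf{0}$. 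For this vector, on one hand $\min_{\mathbf{j}\in U}\langle A\mathbf{j},\mathbf{j}\rangle/\|\mathbf{j}\|_\eta^2 \leq \langle A\mathbf{j}^\ast,\mathbf{j}^\ast\rangle/\|\mathbf{j}^\ast\|_\eta^2$, and on the other hand, normalizing $\mathbf{j}^\ast$ so that $\|\mathbf{j}^\ast\|_\eta = 1$ and using that $\mathbf{j}^\ast \in \hsoln$, the definition \eqref{n-minprob} as a maximum over the unit $\eta$-sphere in $\hsoln$ gives $\langle A\mathbf{j}^\ast,\mathbf{j}^\ast\rangle \leq \tau_n(\eta)$. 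Combining, the inner minimum over $U$ is $\leq \tau_n(\eta)$, and taking the max over all such $U$ preserves the inequality.

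The one technical point requiring care is justifying that $U \cap \hsoln \neq \{\mathbf{0}\}$ whenever $\dim U = n$. This is pure linear algebra: the map $\Phi : \hsol \to \R^{n-1}$, $\Phi(\mathbf{v}) = (\langle\mathbf{v},\eta\mathbf{j}_1\rangle, \ldots, \langle\mathbf{v},\eta\mathbf{j}_{n-1}\rangle)$, restricted to $U$, cannot be injective because $\dim U = n > n-1 = \dim \R^{n-1}$; hence its kernel $U \cap \hsoln$ is nontrivial. This is the only real content beyond bookkeeping, and it is entirely elementary. I expect the main (mild) obstacle to be purely expository: making sure the normalization of $\mathbf{j}^\ast$ is handled cleanly so that the ratio $\langle A\mathbf{j}^\ast,\mathbf{j}^\ast\rangle/\|\mathbf{j}^\ast\|_\eta^2$ matches the scale-invariant form in \eqref{n-minprob}, which is trivially true since both the numerator and the denominator are homogeneous of degree $2$. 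No deeper analytic input (compactness, weak closedness) is needed here — all of that was already absorbed into Propositions~\ref{lemma02}--\ref{lemma04}.
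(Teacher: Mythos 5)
Your proof is correct and follows essentially the same max--min argument as the paper: one inequality comes from taking $U=U_n$ and invoking Lemma \ref{second_lem1}, the other from exhibiting a nonzero vector in the intersection of an arbitrary $n$-dimensional $U$ with a ``tail'' subspace on which the Rayleigh quotient is bounded above by $\tau_n(\eta)$. The only (harmless, and in fact slightly cleaner) difference is that you intersect $U$ with $\hsoln$ rather than with $\mathrm{span}\left\{\mathbf{j}_{n},\mathbf{j}_{n+1},\dots\right\}$ as the paper does, so the nontriviality of the intersection reduces to a pure dimension count for the map $\mathbf{v}\mapsto\left(\left\langle\mathbf{v},\eta\mathbf{j}_{1}\right\rangle,\dots,\left\langle\mathbf{v},\eta\mathbf{j}_{n-1}\right\rangle\right)$ instead of relying on the completeness of the basis.
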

\begin{proof}
For a given $n$ dimensional linear subspace $U$, we have a non-vanishing
$\mathbf{v}\in U\cap\textrm{span}\left\{  \mathbf{j}_{n},\mathbf{j}_{n+1}%
,\ldots\right\}  $. Then%
\[
\min_{\mathbf{j}\in U}\frac{\langle A  \mathbf{j},  \mathbf{j}\rangle
}{|| \mathbf{j}||_\eta^2  }\leq\frac{\langle A
\mathbf{v},
\mathbf{v}\rangle  }{|| \mathbf{v}||_\eta^2 }\leq
\max_{\mathbf v\in\textrm{span}\left\{  \mathbf{j}_{n},\mathbf{j}_{n+1}%
,\ldots\right\}  }\frac{\langle A  \mathbf{j},\mathbf{j}\rangle  }{||\mathbf{j}||_\eta^2  }=\tau_{n}
\]
and, therefore,%
\[
\max_{\dim\left(  U\right)  =n}\min_{\mathbf{j}\in U}\frac{\langle A
\mathbf{j},
\mathbf{j}\rangle  }{|| \mathbf{j}||_\eta^2 }\leq\tau_{n}.
\]
On the other hand, from (\ref{eq07}) we have%
\[
\max_{\dim\left(  U\right)  =n}\min_{\mathbf{j}\in U}\frac{\langle A
\mathbf{j},
\mathbf{j}\rangle  }{|| \mathbf{j}||_\eta^2 }\geq
\min_{\mathbf{j}\in U_{n}}\frac{\langle A \mathbf{j},\mathbf{j}\rangle
}{||  \mathbf{j}||_\eta^2  }=\tau_{n}.
\]
\end{proof}

\subsection{The Monotonicity Principle for the Eigenvalues}
Here we prove the main result of this paper, i.e. the Monotonicity of the time constants with respect to the electrical resistivity $\eta$. A key role is played by the variational characterization (\ref{second_charac}) appearing in Lemma \ref{second_lem2}.

\begin{thm}\label{monotonicity_thm}
Let $\eta_{1},\eta_{2}\in L^{\infty}_+\left(  \Omega\right)$. It holds that%
\[
\eta_{1} \leq\eta_{2}
\ \textrm{a.e. in}\ \Omega\quad \Longrightarrow\quad\tau_{n}\left(  \eta_{1}\right)  \geq\tau
_{n}\left(  \eta_{2}\right)  \ \forall n\in%
\mathbb{N}
,
\]
$\tau_{n}\left(  \eta_{1}\right)$ being  the $n-$th eigenvalue related to
$\eta_{1}$ and $\tau_{n}\left(  \eta_{2}\right)  $ the one related to
$\eta_{2}$.
\end{thm}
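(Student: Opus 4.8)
The plan is to exploit the max-min variational characterization (\ref{second_charac}) from Lemma \ref{second_lem2}, which was engineered precisely for this purpose: the family of admissible $n$-dimensional subspaces $U$ over which the outer maximum runs does \emph{not} depend on $\eta$, and only the Rayleigh-type quotient $\langle A\mathbf{j},\mathbf{j}\rangle/\|\mathbf{j}\|_\eta^2$ carries the $\eta$-dependence, entirely through the denominator. So the monotonicity in $\eta$ will be reduced to a pointwise monotonicity of that denominator.

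First I would fix $n\in\mathbb{N}$ and an arbitrary $\mathbf{j}\in\hsol\setminus\{\mathbf 0\}$, and observe that the hypothesis $\eta_1\le\eta_2$ a.e.\ in $\Omega$ gives immediately
\[
\|\mathbf{j}\|_{\eta_1}^2=\langle\eta_1\mathbf{j},\mathbf{j}\rangle=\int_\Omega\eta_1|\mathbf{j}|^2\,\mathrm{d}V\le\int_\Omega\eta_2|\mathbf{j}|^2\,\mathrm{d}V=\|\mathbf{j}\|_{\eta_2}^2,
\]
so, since $\langle A\mathbf{j},\mathbf{j}\rangle>0$ (positive definiteness of $A$) for $\mathbf{j}\ne\mathbf 0$, it follows that
\[
\frac{\langle A\mathbf{j},\mathbf{j}\rangle}{\|\mathbf{j}\|_{\eta_1}^2}\ge\frac{\langle A\mathbf{j},\mathbf{j}\rangle}{\|\mathbf{j}\|_{\eta_2}^2}\qquad\forall\,\mathbf{j}\in\hsol\setminus\{\mathbf 0\}.
\]
This is the whole analytic content; everything else is order-theoretic bookkeeping.

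Next I would propagate this pointwise inequality through the two optimizations. Taking the minimum over $\mathbf{j}$ in any fixed $n$-dimensional subspace $U\subseteq\hsol$ preserves the inequality (the minimum of the larger function is at least the minimum of the smaller), giving
\[
\min_{\mathbf{j}\in U}\frac{\langle A\mathbf{j},\mathbf{j}\rangle}{\|\mathbf{j}\|_{\eta_1}^2}\ge\min_{\mathbf{j}\in U}\frac{\langle A\mathbf{j},\mathbf{j}\rangle}{\|\mathbf{j}\|_{\eta_2}^2},
\]
and then taking the supremum over all $U$ with $\dim(U)=n$ again preserves it. By Lemma \ref{second_lem2} the left-hand side equals $\tau_n(\eta_1)$ and the right-hand side equals $\tau_n(\eta_2)$, so $\tau_n(\eta_1)\ge\tau_n(\eta_2)$. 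Since $n$ was arbitrary, the claim follows. One small point I would make explicit is that the minima and maxima in (\ref{second_charac}) are attained (so we may freely write $\min$/$\max$ rather than $\inf$/$\sup$), which is guaranteed by Lemma \ref{second_lem1}, Lemma \ref{second_lem2} and Proposition \ref{lemma04}; alternatively the argument goes through verbatim with $\inf$ and $\sup$.

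Honestly, there is no real obstacle here: the difficulty of the paper has been front-loaded into establishing the $\eta$-independent max-min characterization (\ref{second_charac}), and once that is in hand the Monotonicity Principle is a two-line consequence. The only thing to be a little careful about is that the outer optimization is over subspaces of the \emph{fixed} space $\hsol$, which does not depend on $\eta_1$ or $\eta_2$ — this is exactly why comparing $\tau_n(\eta_1)$ and $\tau_n(\eta_2)$ is legitimate, and it is worth flagging in the write-up so the reader sees where the hypothesis $\eta_1\le\eta_2$ enters (only in the denominator) and why no cross-dependence between the domain of optimization and the conductivity creeps in.
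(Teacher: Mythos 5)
Your proposal is correct and follows essentially the same route as the paper's own proof: both reduce the claim to the pointwise inequality $\|\mathbf{j}\|_{\eta_1}^2\le\|\mathbf{j}\|_{\eta_2}^2$ and then propagate it through the $\eta$-independent max-min characterization of Lemma \ref{second_lem2}. Your write-up is if anything slightly more careful than the paper's (which contains minor typographical slips in the exponents of the norms), so nothing further is needed.
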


\begin{proof}
First, we observe that if $\eta_{1}\leq\eta_{2}\ $a.e. in $\Omega$, then
$||\mathbf{v}||_{\eta_1}^2  =\left\langle \eta_{1}%
\mathbf{v},\mathbf{v}\right\rangle \leq\left\langle \eta_{2}\mathbf{v}%
,\mathbf{v}\right\rangle \leq||  \mathbf{v}||_{\eta_2}  $.
Then, $\langle A
\mathbf{v},
\mathbf{v}\rangle  /||
\mathbf{v}||_{\eta_1}  \langle A
\mathbf{v},
\mathbf{v}\rangle   /|| \mathbf{v}||_{\eta_2}$ and%
\[
\min_{\mathbf{v}\in U}\frac{\langle A
\mathbf{v},
\mathbf{v}\rangle 
}{|| \mathbf{v}||_{\eta_1}  }\geq\min_{\mathbf{v}\in U}%
\frac{\langle A
\mathbf{v},
\mathbf{v}\rangle  }{||
\mathbf{v}||_{\eta_2}  },
\]
where $U$ is a linear space. Eventually, from Lemma \ref{second_lem2}, we have%
\[
\tau_{n}\left(  \eta_{1}\right)  =\max_{\dim\left(  U\right)  =n}%
\min_{\mathbf{v}\in U}\frac{\langle A
\mathbf{v},
\mathbf{v}\rangle 
}{||  \mathbf{v}||_{\eta_1}  }\geq\max_{\dim\left(  U\right)
=n}\min_{\mathbf{v}\in U}\frac{\langle A
\mathbf{v},
\mathbf{v}\rangle 
}{|| \mathbf{v}||_{\eta_2}  }=\tau_{n}\left(  \eta_{2}\right)
,
\]
for any $n\in%
\mathbb{N}
$.
\end{proof}

\section{Interpretation of the results}\label{Dfr}
In this section we discuss the meaning of the results derived in the previous Sections. Specifically, we discuss the relevance of the completeness and orthogonality of the basis $\{{\bf j}_n\}_{n\in\N}$ and the convergences to zeros of the time constants $\tau_n(\eta)$.

\subsection{The Completeness of the basis} Basis $\{{\bf j}_n\}_{n\in\N}$ is complete in $\hsol$ as proved in (\ref{prop_complete}). 
 {Here we prove that any $\mathbf{J\in}L^{2}\left(  0,T;H_{L}\left(
\Omega\right)  \right)  $ can be represented by means of the following Fourier
series:%
\begin{eqnarray}
\label{sum_f}
\mathbf{J}\left(x,t\right)  =\sum_{n=1}^\infty i_n\left(t\right)\ {\bf j}_n (x)\quad\textrm{in}\ \Omega\times [0,T].
\end{eqnarray}
Hereafter we define%
\begin{eqnarray*}
\fl
\left\langle \left\langle \mathbf{u},\mathbf{v}\right\rangle \right\rangle
_{\eta}  & =\int_{0}^{T}\int_{\Omega}\eta\left(  x\right)  \mathbf{u}\left(
x,t\right)  \cdot\mathbf{v}\left(  x,t\right)  \textrm{d}V\textrm{d}t\quad \forall {\bf u}, {\bf v}\in L^2(0,T;\hsol).
\end{eqnarray*}}
 {
Before proceeding with the proof of (\ref{sum_f}), we observe that:
}
 {
\begin{lem}
The factorized function $i_{n}\left(  t\right)  \mathbf{j}_{n}\left(
x\right)  $ is an element of $L^{2}\left(  0,T;H_{L}\left(  \Omega\right)
\right)  $ if\ and only if $i_{n}\in L^{2}\left(  0,T\right)  .$
\end{lem}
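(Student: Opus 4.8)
The plan is to reduce the claim to a one-line computation of the Bochner norm. First I would fix the ambient norm: for $\mathbf{v}\in\hsol$ one has $\dive(\mathbf{v})=0$, so the norm of $\hdiv$ restricts on $\hsol$ to the plain $L^2(\Omega;\R^3)$-norm; in particular, reading the factorized function as the map $t\mapsto i_n(t)\mathbf{j}_n\in\hsol$, membership in $L^2(0,T;\hsol)$ is governed by the finiteness of $\int_0^T |i_n(t)|^2\,\norm{\mathbf{j}_n}_{\hsol}^2\,\mathrm{d}t$, the factor $\norm{\mathbf{j}_n}_{\hsol}$ being independent of $t$.

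Next I would pin down the constant $\norm{\mathbf{j}_n}_{\hsol}$. Since $\eta\in L^\infty_+(\Omega)$ is bounded above and below by positive constants, writing $\eta_L=\mathrm{ess\,inf}_\Omega\,\eta>0$ one has $\sqrt{\eta_L}\,\norm{\mathbf{v}}\le\norm{\mathbf{v}}_\eta\le\sqrt{\norm{\eta}_{L^\infty(\Omega)}}\,\norm{\mathbf{v}}$ for every $\mathbf{v}\in L^2(\Omega;\R^3)$; combined with the normalization $\norm{\mathbf{j}_n}_\eta=1$ from Proposition \ref{lemma04}(iii) (and recalling $\mathbf{j}_n\in\hsol$ from the same statement), this gives
\[
0<\frac{1}{\sqrt{\norm{\eta}_{L^\infty(\Omega)}}}\le \norm{\mathbf{j}_n}_{\hsol}\le\frac{1}{\sqrt{\eta_L}}<+\infty .
\]
Denoting this finite, strictly positive constant by $c_n:=\norm{\mathbf{j}_n}_{\hsol}$, and noting that $t\mapsto i_n(t)\mathbf{j}_n$ is strongly measurable (a scalar measurable function times the fixed vector $\mathbf{j}_n$) and that $i_n(t)\mathbf{j}_n\in\hsol$ for every $t$ because $\hsol$ is a linear subspace, I would conclude by homogeneity of the norm that
\[
\int_0^T\norm{i_n(t)\mathbf{j}_n}_{\hsol}^2\,\mathrm{d}t=c_n^2\int_0^T|i_n(t)|^2\,\mathrm{d}t .
\]
Since $0<c_n<+\infty$, the left-hand side is finite if and only if $\int_0^T|i_n(t)|^2\,\mathrm{d}t<+\infty$, i.e. if and only if $i_n\in L^2(0,T)$, which settles both implications at once.

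There is no genuine obstacle here: the only points deserving a word are that the $\hdiv$-norm collapses to the $L^2(\Omega;\R^3)$-norm on the divergence-free space $\hsol$, so that no derivative terms enter the computation, and that $c_n=\norm{\mathbf{j}_n}_{\hsol}$ is neither zero nor infinite — which is exactly what the normalization $\norm{\mathbf{j}_n}_\eta=1$ together with the two-sided bound on $\eta$ guarantees.
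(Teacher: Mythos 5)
Your proof is correct. The paper states this lemma without any proof at all, so there is nothing to compare against on the paper's side; your argument is the standard one the authors evidently have in mind: on the divergence-free space $\hsol$ the $\hdiv$-norm reduces to the $L^2(\Omega;\R^3)$-norm, the Bochner norm factorizes as $\int_0^T\norm{i_n(t)\mathbf{j}_n}^2\,\mathrm{d}t=\norm{\mathbf{j}_n}^2\int_0^T|i_n(t)|^2\,\mathrm{d}t$, and the two-sided bound $1/\sqrt{\norm{\eta}_{L^\infty(\Omega)}}\le\norm{\mathbf{j}_n}\le 1/\sqrt{\eta_L}$ coming from $\norm{\mathbf{j}_n}_\eta=1$ and $\eta\in L^\infty_+(\Omega)$ ensures the constant is neither zero nor infinite. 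Your attention to strong measurability and to the nonvanishing of $\mathbf{j}_n$ covers the only points where a gap could conceivably arise.
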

}

 {
When $T$ is finite, $L^{2}\left(  0,T\right)  $ admits a countable Fourier
basis $\left\{  f_{k}\right\}  _{k\in%
\mathbb{N}
}$. Therefore $i_{n}\left(  t\right)  =\sum_{k=1}^{+\infty}a_{n,k}f_{k}\left(
t\right)  $ and, consequently, proving (\ref{sum_f}) is equivalent to proving%
\begin{equation*}
\mathbf{J}\left(  x,t\right)  =\sum_{n,k=1}^{+\infty}a_{n,k}f_{k}\left(
t\right)  \mathbf{j}_{n}\left(  x\right) \quad\textrm{in}\ \Omega\times [0,T].
\end{equation*}
The following Theorem holds.
\begin{thm}
\label{ThCB}
For any $0<T<+\infty$, the set $\left\{  f_{k}\left(  t\right)  \mathbf{j}_{n}\left(
x\right)  \right\}  _{k,n\in\mathbb{N}
}$ forms a complete basis of $L^{2}\left(  0,T;H_{L}\left(
\Omega\right)  \right)$.
\end{thm}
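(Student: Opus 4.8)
The plan is to show that the product basis $\{f_k(t)\mathbf{j}_n(x)\}_{k,n\in\mathbb{N}}$ is complete in the Hilbert space $L^2(0,T;H_L(\Omega))$ by a standard double-density argument: since $\{\mathbf{j}_n\}_{n\in\mathbb{N}}$ is a complete basis of $H_L(\Omega)$ (Proposition \ref{prop_complete}) and $\{f_k\}_{k\in\mathbb{N}}$ is a complete basis of $L^2(0,T)$, their tensor products are complete in the Hilbert tensor product $L^2(0,T)\otimes H_L(\Omega)$, which is isometrically identified with $L^2(0,T;H_L(\Omega))$. Concretely, I would argue by orthogonal complements: suppose $\mathbf{J}\in L^2(0,T;H_L(\Omega))$ satisfies $\langle\langle \mathbf{J}, f_k\mathbf{j}_n\rangle\rangle_\eta = 0$ for all $k,n$, and show $\mathbf{J}=\mathbf 0$. (One should first note that $\langle\langle\cdot,\cdot\rangle\rangle_\eta$ is an inner product on $L^2(0,T;H_L(\Omega))$ equivalent to the standard one, because $\eta\in L^\infty_+(\Omega)$ gives two-sided bounds $\eta_L\|\mathbf v\|^2\le\|\mathbf v\|_\eta^2\le\|\eta\|_\infty\|\mathbf v\|^2$; hence completeness with respect to either inner product is the same statement.)

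The key steps, in order, are as follows. First, fix $\mathbf{J}$ in the orthogonal complement and, for a.e. $t\in(0,T)$, consider the slice $\mathbf{J}(\cdot,t)\in H_L(\Omega)$; define the scalar functions $g_n(t) := \langle \eta\,\mathbf{J}(\cdot,t),\mathbf{j}_n\rangle$. A Cauchy-Schwarz estimate shows $g_n\in L^2(0,T)$, with $\int_0^T |g_n(t)|^2\,dt \le \|\eta\|_\infty^2\|\mathbf{j}_n\|^2 \int_0^T\|\mathbf J(\cdot,t)\|^2\,dt<+\infty$. Second, the hypothesis $\langle\langle\mathbf{J}, f_k\mathbf{j}_n\rangle\rangle_\eta=0$ reads $\int_0^T f_k(t)\, g_n(t)\,dt = 0$ for every $k$; since $\{f_k\}$ is complete in $L^2(0,T)$, this forces $g_n \equiv 0$ in $L^2(0,T)$, i.e. $g_n(t)=0$ for a.e. $t$. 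Third, swap the order of the null sets: there is a single null set $N\subset(0,T)$ outside of which $g_n(t)=0$ for all $n$ simultaneously (countable union of null sets). For $t\notin N$ we then have $\langle\eta\,\mathbf{J}(\cdot,t),\mathbf{j}_n\rangle=0$ for all $n$; by completeness of $\{\mathbf{j}_n\}$ in $H_L(\Omega)$ with respect to the $\eta$-inner product (Proposition \ref{prop_complete}), this gives $\mathbf{J}(\cdot,t)=\mathbf 0$ in $H_L(\Omega)$. Fourth, integrating, $\|\mathbf J\|^2_{L^2(0,T;H_L(\Omega))} = \int_0^T \|\mathbf J(\cdot,t)\|^2\,dt = 0$, so $\mathbf{J}=\mathbf 0$, proving completeness.

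The main obstacle, and the only point requiring care, is the measurability and integrability bookkeeping for the slices $t\mapsto\mathbf{J}(\cdot,t)$ and $t\mapsto g_n(t)$, together with the interchange of the (countably many) null sets — this is the standard subtlety in proving that a Bochner $L^2$ space is the Hilbert tensor product of its factors. I would handle it by invoking the identification $L^2(0,T;H) \cong L^2(0,T)\otimes H$ valid for any separable Hilbert space $H$ (here $H=H_L(\Omega)$, which is separable as a closed subspace of $L^2(\Omega;\mathbb{R}^3)$), under which the elementary tensors $f_k\otimes\mathbf{j}_n$ correspond exactly to $f_k(t)\mathbf{j}_n(x)$; the completeness of $\{f_k\otimes\mathbf{j}_n\}$ in the tensor product is then immediate from the completeness of the two factor bases. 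Either route — the hands-on slicing argument or the abstract tensor-product identification — yields the result; the slicing argument is more self-contained and is the one I would write out, taking Proposition \ref{prop_complete} and the equivalence of norms as the two inputs.
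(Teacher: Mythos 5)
Your proof is correct and follows essentially the same route as the paper's: test orthogonality against the products $f_k(t)\mathbf{j}_n(x)$, use completeness of $\{f_k\}$ in $L^2(0,T)$ to reduce to $\langle \eta\,\mathbf{J}(\cdot,t),\mathbf{j}_n\rangle=0$ for a.e.\ $t$, then invoke completeness of $\{\mathbf{j}_n\}$ in $H_L(\Omega)$ (Proposition \ref{prop_complete}) to conclude that almost every slice vanishes. Your write-up is in fact slightly more careful than the paper's, which passes over the countable-union-of-null-sets bookkeeping and the equivalence of the $\eta$-weighted and standard inner products without comment.
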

\begin{proof}
Let $\mathbf{w}\left(  x,t\right)  \in L^{2}\left(  0,T;H_{L}\left(
\Omega\right)  \right)  \backslash$span$\left\{  f_{k}\mathbf{j}_{n}\right\}
_{k,n\in\mathbb{N}
}$ be a nonvanishing vector field. Let $\mathbf{w}$ be decomposed as
$\mathbf{w=w}_{1}\mathbf{+w}_{2}$ with $\mathbf{w}_{1}\in$span$\left\{
f_{k}\mathbf{j}_{n}\right\}  _{k,n\in%
\mathbb{N}
}$ and $\mathbf{w}_{2}\neq\mathbf{0}$ orthogonal to span$\left\{
f_{k}\mathbf{j}_{n}\right\}  _{k,n\in\mathbb{N}
}$. From $\left\langle \left\langle \mathbf{w}_{2},f_{k}\mathbf{j}%
_{n}\right\rangle \right\rangle _{\eta}=0$, for any $k,n\in\N$, we have%
\begin{equation}
0=\int_{0}^{T}f_{k}\left(  t\right)  \int_{\Omega}\eta\left(  x\right)
\mathbf{w}_{2}\left(  x,t\right)  \cdot\mathbf{j}_{n}\left(  x\right)
\textrm{d}V\textrm{d}t.\label{eq03}%
\end{equation}
Since (\ref{eq03}) is valid for any $k\in\N$ and $\left\{  f_{k}\right\}  _{k\in\mathbb{N}}$ is a complete basis, it follows that $\int_{\Omega}\eta\left(  x\right)
\mathbf{w}_{2}\left(  x,t\right)  \cdot\mathbf{j}_{n}\left(  x\right)  $d$V=0$
a.e. in $]0,T[  $. The latter, being valid for any $n\in\N$, together with the completeness of basis $\left\{\mathbf{j}_{n}\right\}_{n\in\mathbb{N}}$ in $H_{L}\left(  \Omega\right)$, implies that $\mathbf{w}_{2}\left(  x,t\right)  =\mathbf{0}$  a.e. in
$\Omega\times]0,T[$, which contradicts the assumption
$\mathbf{w}_{2}\neq\mathbf{0}$.
\end{proof}
}  {
\begin{rem}
Theorem \ref{ThCB} can be stated as follows:
\begin{equation*}
L^{2}\left(0,T;H_{L}\left(\Omega\right)\right)=\overline{L^2\left(0,T\right)\otimes\hsol}.    
\end{equation*}
\end{rem}
}

 {
It is worth noting that when $\left\{f_{k}\right\}_{k\in \mathbb{N}}$ is an orthonormal basis, then $\left\{  f_{k}\left(  t\right)  \mathbf{j}_{n}\left(
x\right)  \right\}  _{k,n\in\mathbb{N}
}$ forms an orthonormal basis with respect to the weighted inner product
$\left\langle \left\langle \mathbf{\cdot,\cdot}\right\rangle \right\rangle_{\eta}$.}

\subsection{Orthogonality and decoupling}
Function $i_n$ can be found by solving an ordinary differential equation that can be obtained by replacing representation (\ref{sum_f}) in (\ref{weak_form}) and selecting ${\bf w}={\bf j}_n$:
\begin{eqnarray}
\label{ode_f2}
r_n i_n+l_n i_n'=\mathcal{E}_n \quad\forall n\in\N.
\end{eqnarray}
In (\ref{ode_f2}) $r_n=\left\langle \eta {\bf j}_n, {\bf j}_n\right\rangle$, $l_n=\left\langle A {\bf j}_n,{\bf j}_n \right\rangle$ and $\mathcal{E}_n=-\langle\partial_{t}{\bf A}_S, {\bf j}_n\rangle$.

The system of ODEs in (\ref{ode_f2}) is clearly decoupled thanks to the orthogonality properties of $\{{\bf j}_n\}_{n\in\N}$ proved in Proposition \ref{lemma04} (see (v) and (vi)). When using a basis different from $\{{\bf j}_n\}_{n\in\N}$, the ODEs of (\ref{ode_f2}) are no longer decoupled, i.e.
\begin{eqnarray*}
\sum_{k=1}^{+\infty} \left( r_{nk} i_k+l_{nk} i_k'\right)={\mathcal{E}_n} \quad\forall n\in\N,
\end{eqnarray*}
where $r_{nk}=\left\langle \eta {\bf j}_k, {\bf j}_n\right\rangle$ and $l_{nk}=\left\langle A {\bf j}_k,{\bf j}_n \right\rangle$.

Furthermore, when the source current $\mathbf{J}_s$ is vanishing, we have that $\mathbf{A}_s=\mathbf 0$ and, hence, $\mathcal{E}_n=0$ for any $t>0$. In this case, the solution of (\ref{ode_f2}) is given by
\begin{equation}\label{i_n(t)}
i_n(t)=i_n(0)e^{-t/\tau_n(\eta)}
\end{equation}
where we exploited $l_n/r_n=\tau_n(\eta)$, as follows from (\ref{weak_higher}). Equation (\ref{i_n(t)}), together with (\ref{sum_f}), gives (\ref{eq20Su}).

\subsection{Circuit interpretation}
 {Hereafter we assume the unit of $i_n$ to be that of an electrical current (A), in the International System of Units (SI). The unit of $\textbf{j}_n$ is, therefore, the inverse of an area ($\textrm{m}^{-2}$).}

 {This choice is convenient in view of the following circuit interpretation.} Specifically, equation (\ref{ode_f2}) can be interpreted in term of an electrical circuit, where $r_n$ corresponds to a resistor, $l_n$ corresponds to an inductor and $\mathcal{E}_n$ to a voltage generator, as showed in Figure \ref{ode_fig}.

 \begin{figure}[!ht]
	\centering
	\includegraphics[width=0.3\textwidth]{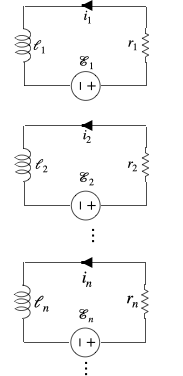}
		\caption{Interpretation of equation (\ref{ode_f2}) in terms of electrical circuit.
		}
	\label{ode_fig}
\end{figure}

When a source electrical current $i_S$ circulates in a coil in a prescribed position of the space, it results that $\textbf{A}_S\left(t,x\right)=
i_S\left(t\right)\textbf{a}_S\left( x\right)$. Physically, ${\bf a}_S={\bf a}_S\left( x\right)$ is the vector potential related to a constant unitary current ($i_S(t)=1$). In this case, which is very common in practical applications, we have that $\mathcal{E}_n=-m_ni_n'$, where $m_n=\langle{\bf a}_S,{\bf j}_n\rangle$ represents a mutual coupling. The electrical circuit of Figure \ref{ode_fig} becomes that sketched in Figure \ref{fig5}.

\begin{figure}[!ht]
	\centering
	\includegraphics[width=0.95\textwidth]{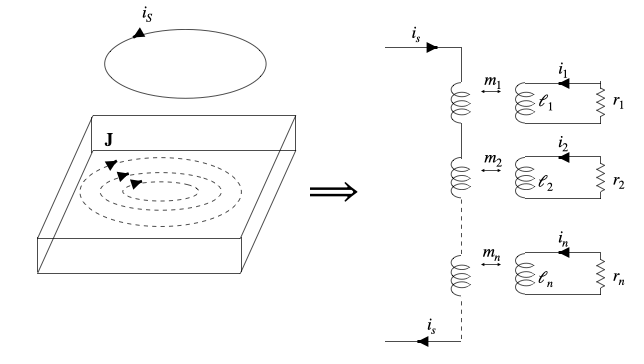}
		\caption{The decoupled systems. $i_c$ is the inducted current produced by the vector potential ${\bf A}_s$ produced by the source.
		}
	\label{fig5}
\end{figure}

\subsection{Ill-posedness of the problem}
From the perspective of the inverse problem, that is the reconstruction of $\eta$ from the knowledge of the time constants, Proposition \ref{infinitesima} represents the \lq\lq signature\rq\rq of the  ill-posedness. 
Indeed, the time constants $\tilde\tau_n$ measured from experimental data are affected by the noise, that is $\tilde\tau_n=\tau_n+\delta_n$, where $\tau_n$ is the noise-free time constant and $\delta_n$ is the noise. If the noise samples $\delta_n$ are in the order of $\Delta>0$, all time constants smaller than $\Delta$ are not reliable and cannot be used by the imaging algorithm.
Since the sequence $\left\{\tau_n\right\}_{n\in\N}$ is monotonic and approaches zero as $n\to\infty$, only a finite number of time constants are larger than the noise level $\Delta$ and can be processed by the imaging algorithm. This means that without any prior information, we can reconstruct only an approximation of the unknown $\eta$ described by a finite number of unknowns parameters.

\subsection{Shape Reconstruction, converse of Monotonicity and Bounds}
 {An important application of MP refers to the inverse obstacle problem, where the unknown is the shape of one or more inclusions in a background medium. Let $\eta_{BG}$ be the electrical resistivity of the background medium and let $\eta_I$ be the electrical resistivity of inclusions. For the sake of simplicity, we assume that both $\eta_{BG}$ and $\eta_I$ are constant, but this assumption can be relaxed. In addition, we assume $\eta_I > \eta_{BG}$; the other case ($\eta_I < \eta_{BG}$) can be treated similarly.
}

 {If $A$ is the region occupied by an inclusion, the related electrical resistivity is $\eta_A\left(x\right)=\eta_I\,\chi_A\left(x\right)+\eta_{BG}\,\chi_{\Omega\setminus A}\left(x\right)$ in $\Omega$. Then, Theorem \ref{monotonicity_thm} implies the following form of MP:
\begin{equation}
    A \subset B \subset \Omega \Longrightarrow \tau_n\left(B\right) \leq \tau_n\left(A\right)\quad \forall n \in \N.
    \label{Inv_Obs_Prob}
\end{equation}
}

 {Proposition (\ref{Inv_Obs_Prob}) can be turned into an imaging algorithm and, under proper conditions, upper and lower bounds are available, even in the presence of noise (see \cite{Tamburrino2016284}, based on \cite{Tamburrino_2002,harrach2015resolution}). That is, if $V \subset \Omega$ is an inclusion, the MP algorithm provides two subsets $V_I$ and $V_U$ such that $V_I \subseteq V \subseteq V_U$.
}

 {
When the converse of (\ref{Inv_Obs_Prob}) or similar is available, as in \cite{harrach2013monotonicity,eberle2020shape,harrach2019monotonicity,griesmaier2018monotonicity,albicker2020monotonicity,daimon2020monotonicity, harrach2019monotonicity-based,harrach2020monotonicity-based,harrach2015combining}, then the imaging method based on MP provides a full characterization of the inclusion $V$. When the converse of MP is not known, as for Magnetic Induction Tomography, then the bounds provide a valuable tool to evaluate, a posteriori, the quality of a reconstruction. Indeed, if $V_I$ is close \lq\lq enough\rq\rq\ to $V_U$, then $V_I$ and $V_U$ constitute a proper reconstruction. Of course, this does not exclude that $V_I=\emptyset$ or $V_U=\Omega$, which would require the converse of MP.
}

\subsection{Further comments}
We point out that the mathematical treatment of Section remains valid in $\R^N$ for $N$ higher than or equal to three, where the kernel of the operator $A$ is equal to the fundamental solution of the Laplace equation.
Finally, we observe that these results can be adapted by minor changes to other problems governed by a diffusion equation \cite{evans2010partial}, such as Optical Diffusive Tomography \cite{jiang2018diffuse} and Thermal Tomography \cite{maldague2012nondestructive}.

\section{Conclusions}\label{C}
In this paper, we have treated an inverse problem consisting of reconstructing the electrical resistivity of a conducting material starting from the free response in the Magneto-Quasi-Stationary limit, a diffusive phenomenon. This inverse problem has been treated in the framework of the Monotonicity Principle.

Specifically, we have proved that the current density can be represented in terms of a modal series, where the dependence upon time and space is factorized (Section \ref{Dfr}). This result has been achieved by finding a proper modal basis in $\hsol$ (Section \ref{TEP}). This key result is based on the analysis of an elliptic eigenvalue problem, following the separation of variables. Then, we find that the free response of the system is characterized by the time constants/eigenvalues $\tau_n(\eta)$ as in (\ref{eq20Su}) (see Section \ref{Dfr}). Consequently, the measured quantities are characterized by the same set of time constants (see (\ref{eq21Su}) and (\ref{eq22Su})).

Moreover, for this continuous setting we have proved the Monotonicity for the operator mapping the electrical resistivity $\eta$ into the countable (ordered) set of the time constants for the source free response. This entails the possibility of also applying the MPM as an imaging method in the continuous setting.

Finally, we highlight that the Monotonicity Principle has been found in many different physical problems governed by diverse PDEs. Despite its rather general nature, each different physical/mathematical context requires the discovery of the proper operator showing the MP with an ad-hoc approach, such as the one presented in this paper.

\section*{Acknowledgements}
This work has been partially supported by the MiUR-Dipartimenti di Eccellenza 2018-2022 grant \lq\lq Sistemi distribuiti intelligenti\rq\rq of Dipartimento di Ingegneria Elettrica e dell'Informazione \lq\lq M. Scarano\rq\rq, by the MiSE-FSC 2014-2020 grant \lq\lq SUMMa: Smart Urban Mobility
Management\rq\rq, by GNAMPA of INdAM and by the CREATE Consortium.

\section*{References}
\bibliographystyle
{iopart-num}
\bibliography{biblioPTZ3}

\end{document}